\newtheorem{theorem}{Theorem}
\newtheorem{lemma}{Lemma}
\newtheorem{remark}{Remark}
\newtheorem{corollary}{Corollary}
\newtheorem{proposition}{Proposition}
\begin{document}
\author{Davit Baramidze, Lars-Erik Persson and George Tephnadze}
\title[Fejér means]{Some now restricted maximal operators of Fejér means of Walsh-Fourier series  in the space $H_{1/2}$}

\address{D. Baramidze, The University of Georgia, School of science and technology, 77a Merab Kostava St, Tbilisi 0128, Georgia and Department of Computer Science and Computational Engineering, UiT - The Arctic University of Norway, P.O. Box 385, N-8505, Narvik, Norway.}
\email{davit.baramidze@ug.edu.ge }
\address{L.-E. Persson, UiT The Arctic University of Norway, P.O. Box 385, N-8505, Narvik, Norway and Department of Mathematics and Computer Science, Karlstad University, 65188 Karlstad, Sweden.}
\email{larserik6pers@gmail.com}
\address{G. Tephnadze, The University of Georgia, School of science and
	technology, 77a Merab Kostava St, Tbilisi 0128, Georgia.}
\email{g.tephnadze@ug.edu.ge}

\thanks{The research was supported by Shota Rustaveli National Science Foundation grant no. PHDF-21-1702.}
\date{}
\maketitle

\begin{abstract}
In this paper we derive the maximal subspace of natural numbers $\left\{ n_{k}:k\geq 0\right\} $, such that the restricted maximal operator, defined by ${\sup }_{k\in \mathbb{N}}\left\vert \sigma _{n_{k}}F\right\vert$ 
 on this subspace of Fejér means of Walsh-Fourier series  is bounded  from the martingale Hardy space $H_{1/2}$ to the Lebesgue space $L_{1/2}.$ The sharpness of this result is also proved.
\end{abstract}

\date{}

\textbf{2020 Mathematics Subject Classification.} 26015, 42C10, 42B30.

\textbf{Key words and phrases:} Walsh system, Fejér means, martingale Hardy
space, maximal operators, restricted maximal operators.

\section{INTRODUCTION}

\bigskip All symbols used in this introduction can be found in Section 2.

In the one-dimensional case, the weak (1,1)-type inequality for the
maximal operator $\sigma ^{\ast }$ of Fejér means $\sigma_n$ with respect to the Walsh system, defined by
\begin{equation*}
\sigma ^{\ast }f:=\sup_{n\in \mathbb{N}}\left\vert \sigma _{n}f\right\vert
\end{equation*}%
was investigated in Schipp \cite{Sc} and Pál, Simon \cite{PS} (see also \cite{BNPT}, \cite{NTT} and \cite{PTW2}). Fujii \cite{Fu} and Simon \cite{Si2} proved
that $\sigma ^{\ast }$ is bounded from $H_{1}$ to $L_{1}$. Weisz \cite{We2}
generalized this result and proved boundedness of $\sigma ^{\ast }$ from the
martingale space $H_{p}$ to the Lebesgue space $L_{p}$ for $p>1/2$. Simon 
\cite{Si1} gave a counterexample, which shows that boundedness does not hold
for $0<p<1/2.$ A counterexample for $p=1/2$ was given by Goginava \cite{GoAMH}. Moreover, in \cite{Goginava}  (see also \cite{tep1}) he  proved that there
exists a martingale $F\in H_{p}$ $\left( 0<p\leq 1/2\right) ,$ such that 
$$
\sup_{n\in \mathbb{N}}\left\Vert \sigma _{n}F\right\Vert _{p}=+\infty .
$$

Weisz \cite{We4} proved that
the maximal operator $\sigma ^{\ast }$  of the Fejér means is bounded from
the Hardy space $H_{1/2}$ to the space $weak-L_{1/2}$. 

The boundedness of
weighted maximal operators are considered in  
\cite{tep2}, \cite{tep3}. The results for summability of Fejér means of Walsh-Fourier series can be found in \cite{tb}, \cite{Mor}, \cite{na},   \cite{PTW1}, \cite{tep6}, \cite{We2}.

To study convergence of subsequences of  Fejér means and their restricted  maximal operators on the martingale Hardy spaces $H_p(G)$ for $0<p\leq 1/2,$ the central role is played by the fact that any natural number $n\in \mathbb{N}$ can be uniquely expression as
$$ 
n=\sum_{k=0}^{\infty }n_{j}2^{j},  \ \ n_{j}\in Z_{2} ~(j\in \mathbb{N}), $$ 
where only a finite numbers of $n_{j}$ differ from zero
and their important characters  $\left[ n\right],$ $\left\vert n\right\vert,$ $\rho\left( n\right)$  and $V(n)$ are defined by
\begin{equation*}
\left[ n\right] :=\min \{j\in \mathbb{N},n_{j}\neq 0\}, \ \ 
\left\vert n\right\vert :=\max \{j\in \mathbb{N},n_{j}\neq 0\}, \ \ 
\rho\left( n\right) =\left\vert n\right\vert -\left[ n\right] 
\end{equation*}
and
\begin{equation*}
V\left( n\right): =n_{0}+\overset{\infty }{\underset{k=1}{\sum }}\left|
n_{k}-n_{k-1}\right|, \text{ \ for \
	all \ \ }n\in \mathbb{N}.
\end{equation*}

Weisz \cite{We3} (see also \cite{We1}) also proved that for any $F\in H_p(G)$ $(p>0),$  the maximal operator 
$$
\underset{n\in \mathbb{N}}{\sup }\left\vert \sigma
_{2^n}F\right\vert
$$
is bounded from
the Hardy space $H_{p}$ to the Lebesgue space $L_{p}$, but (for details see \cite{PTWbook}) it is not bounded from the Hardy space  $H_{p}$ to the  space $H_{p}.$
Persson and Tephnadze \cite{PT} generalized this result and proved that if $0<p\leq 1/2$ and $\left\{ n_{k}:k\geq 0\right\} $ is a sequence of positive numbers, such that
	\begin{equation}\label{cond1}
	\sup_{k\in \mathbb{N}}\rho \left( n_{k}\right) \leq c<\infty,
	\end{equation}%
	then the maximal operator  $\widetilde{\sigma }^{\ast ,\nabla },$ defined by
		\begin{equation}\label{max}
	\widetilde{\sigma }^{\ast ,\nabla }F=\underset{k\in \mathbb{N}}{\sup }
	\left\vert \sigma _{n_{k}}F\right\vert,
	\end{equation}%
	is bounded from the Hardy space $H_{p}$ to the Lebesgue space $L_{p}.$
	Moreover, if $0<p<1/2$ and $\left\{ n_{k}:k\geq 0\right\} $ is a sequence of positive numbers, such that
\begin{equation*}
\sup_{k\in \mathbb{N}}\rho \left( n_{k}\right) =\infty, 
\end{equation*}
then there exists a martingale $F\in H_{p}$ such that 
$$
\underset{k\in \mathbb{N}}{\sup }\left\Vert \sigma _{n_{k}}F\right\Vert_{p}=\infty .
$$
From this fact it follows that if $0<p<1/2,$ $f\in H_{p}$ and $\left\{ n_{k}:k\geq 0\right\} $ is any sequence of positive numbers, then the maximal operator, defined by \eqref{max} is bounded from the Hardy space $H_{p}$ to the space $L_{p}$ if and only if the condition \eqref{cond1} is fulfilled. 
Moreover,  if $0<p<1/2,$ $f\in H_{p}$ and $\left\{ n_{k}:k\geq 0\right\} $ is any sequence of positive numbers, then $\sigma _{n_{k}}f$ are uniformly bounded	from the Hardy space $H_{p}$ to the Lebesgue space $L_{p}$ if and only if the condition \eqref{cond1} is fulfilled.  Moreover, condition \eqref{cond1} is necessary and sufficient condition for the boundedness of subsequence $\sigma _{n_{k}}f$ from the Hardy space $H_{p}$ to the Hardy space $H_{p}.$ 

It is easy to prove that condition  \eqref{cond1}  also implies that the maximal operator \eqref{max} is bounded from the Hardy space $H_{1/2}$ to the Lebesgue space $L_{1/2}.$ Moreover, for the subsequence $\{n_k=2^k+1\},$ we have that
$$
\rho \left( 2^k+1\right) =k\to\infty, \  \text{as} \   k\to\infty,
$$
but the maximal operator 
$$\underset{k\in \mathbb{N}}{\sup }
\left\vert \sigma _{2^k+1}F\right\vert
$$
is bounded from the Hardy space $H_{1/2}$ to the Lebesgue space $L_{1/2}.$

In \cite{tep8} it was proved that if $F\in H_{1/2},$ then there exists an absolute
	constant $c,$ such that%
	\begin{equation*}
	\left\Vert \sigma _{n}F\right\Vert _{H_{1/2}}\leq cV^{2}\left( n\right)
	\left\Vert F\right\Vert _{H_{1/2}}.
	\end{equation*}	
Moreover, if  $\left\{ n_{k}:k\geq 0\right\} $ is a subsequence of positive integers $\mathbb{N}_{+},$ such that 
$$\sup_{k\in \mathbb{N}}V\left(n_{k}\right) =\infty ,$$
and 
$\Phi_n$ is any nondecreasing, nonnegative sequence, satisfying the conditions $\Phi_n \uparrow \infty $ and
	\begin{equation*}
	\overline{\underset{k\rightarrow \infty }{\lim }}\frac{V^{2}\left(
		n_{k}\right) }{\Phi_{n_{k} }}=\infty, 
	\end{equation*}%
then there exists a martingale $F\in H_{1/2},$ such that
	\begin{equation*}
	\underset{k\in \mathbb{N}}{\sup }\left\Vert \frac{\sigma _{n_{k}}F}{\Phi_{n_{k}} }\right\Vert _{1/2}=\infty .
	\end{equation*}
It follows that if $f\in H_{1/2}$ and $\left\{ n_{k}:k\geq 0\right\} $ is any sequence of positive numbers, then $\sigma _{n_{k}}f$ are bounded	from the Hardy space $H_{1/2}$ to the space $H_{1/2}$ if and only if, for some $c,$ 
$$\sup_{k\in \mathbb{N}}V\left(n_{k}\right)<c<\infty.$$ 

Convergence of subsequences of partial sums and Fejér means of Walsh-Fourier
series can be found in \cite{PTT1}, \cite{tep4}, \cite{tep5} and \cite{tut1}.

In this paper we complement the reported research above by investigating the limit case $p=1/2.$ In particular, we derive the
 maximal subspace of natural numbers $\left\{ n_{k}:k\geq
0\right\} $, such that restricted maximal operator, defined by ${\sup }_{k\in \mathbb{N}}\left\vert \sigma _{n_{k}}F\right\vert$  on this subspace 
of Fejér means of Walsh-Fourier series  is bounded  from the martingale Hardy space $H_{1/2}$ to the Lebesgue space $L_{1/2}.$

This paper is organized as follows: In order not to disturb our discussions later on some
definitions and notations are presented in Section 2. The main result and some of its
consequences can be found in Section 3. For the proof of the main result we need some auxiliary statements, some of them are new and of independent interest (see Propositions 1 and 2). These results are
presented in Section 4. The detailed proofs are given in Section 5.

\section{Definitions and Notations}

Let $\mathbb{N}_{+}$ denote the set of the positive integers, $\mathbb{N}:=%
\mathbb{N}_{+}\cup \{0\}.$ Denote by $Z_{2}$ the discrete cyclic group of
order 2, that is $Z_{2}:=\{0,1\},$ where the group operation is the modulo 2
addition and every subset is open. The Haar measure on $Z_{2}$ is given so
that the measure of a singleton is 1/2.

Define the group $G$ as the complete direct product of the group $Z_{2},$
with the product of the discrete topologies of $Z_{2}$. The elements of $G$
are represented by sequences $x:=(x_{0},x_{1},...,x_{j},...),$ where $%
x_{k}=0\vee 1.$

It is easy to give a base for the neighborhood of $x\in G:$ 
\begin{equation*}
I_{0}\left( x\right) :=G,\text{ \ }I_{n}(x):=\{y\in
G:y_{0}=x_{0},...,y_{n-1}=x_{n-1}\}\text{ }(n\in \mathbb{N}).
\end{equation*}

Denote $I_{n}:=I_{n}\left( 0\right) ,$ $\overline{I_{n}}:=G$ $\backslash $ $%
I_{n}$ and $e_{n}:=\left( 0,...,0,x_{n}=1,0,...\right) \in G,$ for $n\in 
\mathbb{N}$. Then it is easy to prove that 
\begin{equation}
\overline{I_{M}}=\underset{i=0}{\bigcup\limits^{M-1}}I_{i}\backslash
I_{i+1}=\left( \overset{M-2}{\underset{k=0}{\bigcup }}\overset{M-1}{\underset%
{l=k+1}{\bigcup }}I_{l+1}\left( e_{k}+e_{l}\right) \right) \bigcup \left( 
\underset{k=0}{\bigcup\limits^{M-1}}I_{M}\left( e_{k}\right) \right) .
\label{1}
\end{equation}

If $n\in \mathbb{N},$ then every $n$ can be uniquely expressed as $%
n=\sum_{j=0}^{\infty }n_{j}2^{j},$ where $n_{j}\in Z_{2}$ $~(j\in \mathbb{N})
$ and only a finite numbers of $n_{j}$ differ from zero.

Let 
\begin{equation*}
\left[ n\right] :=\min \{j\in \mathbb{N},n_{j}\neq 0\}\text{ \ \ and \ \ \ }%
\left\vert n\right\vert :=\max \{j\in \mathbb{N},n_{j}\neq 0\},
\end{equation*}%
that is $2^{\left\vert n\right\vert }\leq n<2^{\left\vert n\right\vert +1}.$
Set 
\begin{equation*}
d\left( n\right) =\left\vert n\right\vert -\left[ n\right] ,\text{ \ for \
all \ \ }n\in \mathbb{N}.
\end{equation*}

Define the variation $V(n)$ of $n\in \mathbb{N},$ with binary coefficients $\left(n_{k},\ k\in \mathbb{N}\right) ,$ by 
\begin{equation*}
V\left( n\right): =n_{0}+\sum_{k=1}^{\infty }\left\vert
n_{k}-n_{k-1}\right\vert . 
\end{equation*}

Every $n\in \mathbb{N}$ can be also represented as $n=%
\sum_{i=1}^{r}2^{n_{i}},n_{1}>n_{2}>...n_{r}\geq 0.$ For such a representation of $n\in \mathbb{N},$ we denote numbers 
\begin{equation*}
n^{\left( i\right) }=2^{n_{i+1}}+...+2^{n_{r}},i=1,...,r.
\end{equation*}

Let 
$\begin{matrix}
2^{s}\le {{n}_{{{s}_{1}}}}\le {{n}_{{{s}_{2}}}}\le ...\le {{n}_{{{s}_{r}}}}\le {2^{s+1}}, & s\in \mathbb{N}.  \\
\end{matrix}$
For such $n_{s_j},$ which can be written as
$${{n}_{{{s}_{j}}}}=\sum\limits_{i=1}^{{{r}_{{{s}_{j}}}}}{\sum\limits_{k=l_{i}^{{{s}_{j}}}}^{t_{i}^{{{s}_{j}}}}{2^k}},$$ 
where   
$0\le l_{1}^{{{s}_{j}}}\le t_{1}^{{{s}_{j}}}\le l_{2}^{{{s}_{j}}}-2<l_{2}^{{{s}_{j}}}\le t_{2}^{{{s}_{j}}}\le ...\le l_{{{r}_{j}}}^{{{s}_{j}}}-2<l_{{{r}_{s_j}}}^{{{s}_{j}}}\le t_{{{r}_{s_j}}}^{{{s}_{j}}},$
we define
\begin{eqnarray}\label{As}
{{A}_{s}}&:=&\bigcup\limits_{j=1}^{r}{\left\{ l_{1}^{{{s}_{j}}},t_{1}^{{{s}_{j}}},l_{2}^{{{s}_{j}}},t_{2}^{{{s}_{j}}},...,l_{{{r}_{s_j}}}^{{{s}_{j}}}, t_{{{r}_{s_j}}}^{{{s}_{j}}} \right\}}\\ \notag
&=&\bigcup\limits_{j=1}^{r}{\left\{ l_{1}^{{{s}_{j}}},l_{2}^{{{s}_{j}}},...,l_{{{r}_{s_j}}}^{{{s}_{j}}} \right\}}
\bigcup\limits_{j=1}^{r}{\left\{ t_{1}^{{{s}_{j}}},t_{2}^{{{s}_{j}}},...,t_{{{r}_{s_j}}}^{{{s}_{j}}} \right\}}\\ \notag
&=&{\left\{ l_{1}^{s},l_{2}^{s},...,l_{r^1_{s}}^{s} \right\}}
\bigcup{\left\{ t_{1}^{s},t_{2}^{s},...,t_{{{r}_{s}^2}}^{s} \right\}}
\\ \notag
&=&{\left\{ u_{1}^{s},u_{2}^{s},...,u_{r^3_{s}}^{s} \right\}},
\end{eqnarray}
where $ u_{1}^{s}<u_{2}^{s}<...<u_{r^3_{s}}^{s}.$
We note that $
t_{{{r}_{s_j}}}^{{{s}_{j}}}=s\in {{A}_{s}}, \ \ \text{ for } \ \  j=1,2,...,r.  
$

Let us denote the cardinality of the set $A_s$ by $\vert A_s\vert$, that is
$$card(A_s):=\vert A_s\vert.$$ It is evident that
\begin{eqnarray*}
\vert A_s\vert=r^3_s\leq r_s^1+r_s^2.
\end{eqnarray*}
Moreover,
$r^s_3=card(A_s)<\infty$
if and only if
$r_s^1<\infty \ \  \text{and} \ \ r_s^2<\infty.$

We note that if $\vert A_s\vert<\infty,$ then each ${{n}_{{{s}_{j}}}}$ has bounded variation
\begin{equation}\label{cond1x}
V(n_{s_j})<c<\infty, \ \ \ \text{for each}  \ \ \ j=1,2,\ldots,r.
\end{equation}
and  \ \
$r_s^1<\infty, \  r_s^2<\infty \   \text{and} \  r_s^3<\infty.$

Therefore, if we consider blocks (intervals)
\begin{eqnarray}\label{000}
&& [u_1^s,u_1^s],  [u_1^s,u_2^s], ..., [u_1^s,u_{r_s^3}^s]\\ \notag
&& [u_2^s,u_2^s],  [u_2^s,u_3^s], ..., [u_2^s,u_{r_s^3}^s]\\ \notag
&&\ldots
\\  \notag
&&[u_{r_s^3}^s,u_{r_s^3}^s],
\end{eqnarray}
then it is easy to see that it contains  $\frac{{r_s^3}{(r_s^3+1)}}{2}$ different  blocks. Therefore, the dyadic representation of different natural numbers, which contains blocks from \eqref{000},
can be at most $2^{\frac{{r_s^3}{(r_s^3+1)}}{2}}-1,$ which is a finite number and the set
$\{{{n}_{{{s}_{1}}}},{{n}_{{{s}_{2}}}}, ...{{n}_{{{s}_{r}}}}\}$
is finite for all $s\in \mathbb{N}_+$, from which it follows that 
\begin{equation}\label{cond2x}
s_r<\infty, \ \ \ \text{for all}\ \ \ s\in\mathbb{N}.
\end{equation}

Summing up, we can conclude that
$\sup_{s\in\mathbb{N}}\vert A_s\vert<\infty$
if and only if the set  $\{{{n}_{{{s}_{1}}}},{{n}_{{{s}_{2}}}}, ...,{{n}_{{{s}_{r}}}}\}$ is finite for all $s\in \mathbb{N}_+$ and each ${{n}_{{{s}_{j}}}}$ has bounded variation, that is, conditions \eqref{cond1x} and \eqref{cond2x} are fullfiled.

The norms (or quasi-norm) of the spaces $L_{p}(G)$ and $L_{p,\infty }\left(
G\right),\ \left( 0<p<\infty \right) $ are, respectively, defined by 
\begin{equation*}
\left\Vert f\right\Vert _{p}^{p}:=\int_{G}\left\vert f\right\vert ^{p}d\mu
\ \ \text{and} \ \ \left\Vert f\right\Vert _{L_{p,\infty }(G)}^{p}:=\sup_{\lambda
>0}\lambda ^{p}\mu \left( f>\lambda \right) <+\infty ,
\end{equation*}

The $k$-th Rademacher function is defined by 
\begin{equation*}
r_{k}\left( x\right) :=\left( -1\right) ^{x_{k}}\text{\qquad }\left(
x\in G,\text{ }k\in \mathbb{N}\right) .
\end{equation*}

Now, define the Walsh system $w:=(w_{n}:n\in \mathbb{N})$ on $G$ as: 
\begin{equation*}
w_{n}(x):=\overset{\infty }{\underset{k=0}{\Pi }}r_{k}^{n_{k}}\left(
x\right) =r_{\left\vert n\right\vert }\left( x\right) \left( -1\right) ^{%
\underset{k=0}{\overset{\left\vert n\right\vert -1}{\sum }}n_{k}x_{k}}\text{%
\qquad }\left( n\in \mathbb{N}\right) .
\end{equation*}

The Walsh system is orthonormal and complete in $L_{2}\left( G\right) $ (see 
\cite{sws})$.$

If $f\in L_{1}\left( G\right) ,$ then we can define the Fourier coefficients,
partial sums of Fourier series, Fejér means, Dirichlet and Fejér kernels in
the usual manner: 
\begin{eqnarray*}
\widehat{f}\left( n\right) &:=&\int_{G}fw_{n}d\mu ,\,\,\left( n\in \mathbb{N}%
\right),\\
S_{n}f&:=&\sum_{k=0}^{n-1}\widehat{f}\left( k\right) w_{k},%
\text{\ }\left( n\in \mathbb{N}_{+},S_{0}f:=0\right) ,
\\
\sigma _{n}f&:=&\frac{1}{n}\sum_{k=1}^{n}S_{k}f,\text{ \ \ } \\
D_{n}&:=&\sum_{k=0}^{n-1}w_{k\text{ }},\text{ \ }\\
K_{n}&:=&\frac{1}{n}\overset{n}{%
\underset{k=1}{\sum }}D_{k}\text{ },\text{ \ }\left(n\in \mathbb{N}
_{+}\right) .
\end{eqnarray*}

Recall that (see \cite{sws})
\begin{equation}\label{1dn}
D_{2^{n}}\left( x\right) =\left\{ 
\begin{array}{ll}
2^{n} & \,\text{if\thinspace \thinspace \thinspace }x\in I_{n} \\ 
0\, & \ \,\text{if}\,\,x\notin I_{n}.%
\end{array}%
\right.  
\end{equation}

Let 
$n=\sum_{i=1}^{r}2^{n_{i}}, \ \ \ n_{1}>n_{2}>...>n_{r}\geq 0.$
Then (see \cite{G-E-S} and \cite{sws}) 
\begin{equation}
nK_{n}=\sum_{A=1}^{r}\left( \underset{j=1}{\overset{A-1}{\prod }}%
w_{2^{n_{j}}}\right) \left( 2^{n_{A}}K_{2^{n_{A}}}+n^{\left( A\right)
}D_{2^{n_{A}}}\right) .  \label{9a}
\end{equation}

The $\sigma $-algebra, generated by the intervals $\left\{ I_{n}\left(
x\right) :x\in G\right\} $ will be denoted by $\zeta _{n}$ $\left( n\in 
\mathbb{N}\right) .$ Denote by $F=\left( F_{n},n\in \mathbb{N}\right) $ a
martingale with respect to $\zeta _{n}$ $\left( n\in \mathbb{N}\right) $
(for details see e.g. \cite{We1}). The maximal function $F^{\ast }$ of a martingale $F$
is defined by 
\begin{equation*}
F^{\ast }:=\sup_{n\in \mathbb{N}}\left\vert F_{n}\right\vert .
\end{equation*}

In the case $f\in L_{1}\left( G\right),$ the maximal functions $f^{\ast }$ are also
given by 
\begin{equation*}
f^{\ast }\left( x\right) :=\sup\limits_{n\in \mathbb{N}}\left( \frac{1}{\mu
\left( I_{n}\left( x\right) \right) }\left\vert \int_{I_{n}\left( x\right)
}f\left( u\right) d\mu \left( u\right) \right\vert \right) .
\end{equation*}

For $0<p<\infty ,$ the Hardy martingale spaces $H_{p}$ $\left( G\right) $
consist of all martingales, for which 
\begin{equation*}
\left\Vert F\right\Vert _{H_{p}}:=\left\Vert F^{\ast }\right\Vert
_{p}<\infty .
\end{equation*}

A bounded measurable function $a$ is a $p$-atom, if there exists an interval $I,$
such that
\begin{equation*}
\text{ supp}\left( a\right) \subset I, \ \ \ \int_{I}ad\mu =0,\text{ \ }\left\Vert a\right\Vert _{\infty }\leq \mu \left(
I\right) ^{-1/p}.
\end{equation*}

It is easy to check that for every martingale $F=\left( F_{n},n\in \mathbb{N}%
\right) $ and every $k\in \mathbb{N}$ the limit
\begin{equation*}
\widehat{F}\left( k\right) :=\lim_{n\rightarrow \infty }\int_{G}F_{n}\left(
x\right) w_{k}\left( x\right) d\mu \left( x\right)
\end{equation*}%
exists and it is called the $k$-th Walsh-Fourier coefficients of $F.$

The Walsh-Fourier coefficients of $f\in L_{1}\left( G\right) $ are the same
as those of the martingale $\left( S_{2^{n}}f, n\in \mathbb{N}%
\right) $ obtained from $f$.

\section{The Main Result and its Consequences}

Our main result reads:
\begin{theorem}\label{theorem1}
a) Let $f\in {{H}_{1/2}}\left(G \right)$ and $\left\{ n_{k}:k\geq 0\right\} $ be a sequence of positive numbers and let 
$\left\{ n_{s_i}:\ 1\leq i\leq r\right\} \subset \left\{ n_{k}:\ k\geq 0\right\} $ be numbers such that
$
2^{s}\le {{n}_{{{s}_{1}}}}\le {{n}_{{{s}_{2}}}}\le ...\le {{n}_{{{s}_{r}}}}\le {2^{s+1}}, \  s\in \mathbb{N}.  $
If the sets ${{A}_{s}},$ defined by \eqref{As}, are finite for all $s\in \mathbb{N},$ that is the cardinality of the sets ${{A}_{s}}$ are finite:
 $${{\sup }_{s\in \mathbb{N}}}\vert{{A}_{s}}\vert<c<\infty ,$$ 
then the restricted maximal operator $\widetilde{\sigma }^{\ast ,\nabla },$ defined by
\begin{equation}\label{maxoperator}
\widetilde{\sigma }^{\ast ,\nabla }F=\underset{k\in \mathbb{N}}{\sup }
\left\vert \sigma _{n_{k}}F\right\vert,
\end{equation}
is bounded from the Hardy space ${{H}_{1/2}}$ to the Lebesgue space ${{L}_{1/2}}$.

b) (sharpness) Let  
\begin{equation}\label{cond2}
{{\sup }_{s\in \mathbb{N}}}\vert{{A}_{s}}\vert=\infty.
\end{equation}
Then there exists a martingale $f\in {{H}_{1/2}}\left(G \right),$ such that the maximal operator, defined by \eqref{maxoperator},  is not bounded from the Hardy space ${{H}_{1/2}}$ to the Lebesgue space ${{L}_{1/2}}.$
\end{theorem}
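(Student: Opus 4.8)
The plan is to prove part a) by reducing, via the atomic decomposition of $H_{1/2}$, to a single kernel estimate on atoms, and to prove part b) by constructing an explicit counterexample calibrated to the growth of $\vert A_s\vert$.

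For part a), recall the standard criterion (Weisz): a $\sigma$-sublinear operator $T$ is bounded from $H_{1/2}$ to $L_{1/2}$ as soon as it is bounded on $L_2$ and there is a constant $c$ such that
\[
\int_{\overline{I_N}}\left\vert T a\right\vert^{1/2}d\mu\leq c
\]
for every $1/2$-atom $a$ supported on a dyadic interval $I_N$. Since $\widetilde{\sigma}^{\ast ,\nabla }\leq\sigma^{\ast}$ and $\sigma^{\ast}$ is bounded on $L_2$, only the atomic estimate has to be verified. Fix such an atom, so $\left\Vert a\right\Vert_\infty\leq 2^{2N}$. For every index $n_k\leq 2^N$ the Walsh--Fourier coefficients $\widehat a(k)$ with $k<2^N$ vanish (because each $w_k$ with $k<2^N$ is identically $1$ on $I_N$ and $a$ has mean zero), whence $\sigma_{n_k}a\equiv 0$. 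Thus it suffices to bound $\sup_{n_k>2^N}\left\vert\sigma_{n_k}a\right\vert$ on $\overline{I_N}$.

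The core of the proof is the kernel estimate. Writing $\sigma_{n_k}a=a\ast K_{n_k}$ and inserting the Fej\'er-kernel identity \eqref{9a}, I would group the $r$ summands of \eqref{9a} according to the blocks of consecutive binary digits of $n_k$, i.e. according to the endpoints collected in $A_s$ (where $2^s\leq n_k<2^{s+1}$). By the equivalence established around \eqref{cond1x} and \eqref{cond2x}, the hypothesis $\sup_s\vert A_s\vert<c$ forces each $n_{s_j}$ to have uniformly bounded variation $V(n_{s_j})\leq c$ and forces only finitely many distinct values $n_{s_j}$ to occur for each $s$. Consequently \eqref{9a} reduces to a sum of at most $c$ elementary block kernels, each of which is controlled by the auxiliary estimates of Propositions 1 and 2. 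Integrating these bounds over the dyadic decomposition \eqref{1} of $\overline{I_N}$ and summing the resulting geometric series gives the required uniform bound. I expect the main obstacle to lie precisely here: because $p=1/2$ is the critical exponent, every per-annulus estimate is sharp and leaves no slack, so one must genuinely exploit the cancellation encoded in $V(n_{s_j})\leq c$; the cruder factor $V^{2}(n)$ of \cite{tep8} would only deliver boundedness into $L_{1/2,\infty}$ or with an unbounded constant.

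For part b), since $\sup_s\vert A_s\vert=\infty$ I would first extract a strictly increasing sequence $s^{(\ell)}$ with $\vert A_{s^{(\ell)}}\vert\to\infty$ together with indices $n_{k_\ell}$, $2^{s^{(\ell)}}\leq n_{k_\ell}<2^{s^{(\ell)}+1}$, whose block structure realises this growth. I would then build a martingale $F=\sum_\ell\lambda_\ell a_\ell\in H_{1/2}$ as a lacunary sum of normalised atoms $a_\ell$ supported on small dyadic intervals, with coefficients $\lambda_\ell$ chosen so that $\left\Vert F\right\Vert_{H_{1/2}}<\infty$. Evaluating $\sigma_{n_{k_\ell}}F$ and applying the lower kernel bound (Proposition 2) on a set of positive measure disjoint from the supports of the remaining atoms, I would show that the $\ell$-th contribution to $\left\Vert\widetilde{\sigma}^{\ast ,\nabla }F\right\Vert_{1/2}$ grows like $\vert A_{s^{(\ell)}}\vert$, forcing $\left\Vert\widetilde{\sigma}^{\ast ,\nabla }F\right\Vert_{1/2}=\infty$. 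The delicate point is to arrange the atoms so that the cross interactions between distinct terms do not cancel the lower bound for the $\ell$-th term; this is exactly where the dichotomy finite-versus-infinite $\vert A_s\vert$ is decisive, in the spirit of the counterexamples of Goginava \cite{GoAMH} and Tephnadze, but now calibrated to the invariant $\vert A_s\vert$ rather than to $\rho(n)$ or $V(n)$.
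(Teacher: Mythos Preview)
Your outline for part a) is essentially the paper's approach: reduce via Weisz's atomic criterion to a kernel estimate, invoke the upper bound of Proposition~\ref{lemma5} (not Proposition~\ref{lemma3}, which is a lower bound) to write $n_{s_k}\lvert K_{n_{s_k}}\rvert$ as a sum over the block endpoints $\{l_A^{s_k},t_A^{s_k}\}\subset A_s$, and then integrate over the decomposition \eqref{1}. One point you leave implicit but which the paper makes explicit: to pass from a bound on each individual $\lvert\sigma_{n_{s_k}}a\rvert$ to a bound on the supremum over $k$, the paper replaces the endpoints of the particular $n_{s_k}$ by the full list $\{l_1^s,\dots,l_{r_s^1}^s,t_1^s,\dots,t_{r_s^2}^s\}=A_s$, obtaining an upper bound that is \emph{independent of $k$ within the block $[2^s,2^{s+1})$}. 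This is what produces a sum of at most $\lvert A_s\rvert$ elementary terms uniform in $k$, after which the geometric sum over $s\ge M$ closes. Your phrasing ``reduces to a sum of at most $c$ elementary block kernels'' is correct only once this uniformisation step is made.

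For part b), however, there is a genuine gap. You propose to select a \emph{single} index $n_{k_\ell}$ per dyadic scale $s^{(\ell)}$ ``whose block structure realises'' the growth of $\lvert A_{s^{(\ell)}}\rvert$. No such index need exist: the quantity $\lvert A_s\rvert$ measures the union of block endpoints over \emph{all} $n_{s_j}\in[2^s,2^{s+1})$, and may be large even when every individual $n_{s_j}$ has bounded variation. Corollaries~\ref{theorem5} and~\ref{theorem6} give exactly such examples, with $V(n_{s_j})\le 6$ for all $j$ but $\lvert A_s\rvert=s+1$. A single Fej\'er mean $\sigma_{n_{k_\ell}}F$ therefore cannot, via the lower bound of Proposition~\ref{lemma3}, produce more than $O(V(n_{k_\ell}))$ disjoint sets on which the kernel is large; you would recover only the $V$-criterion of \cite{tep8}, not the $\lvert A_s\rvert$-criterion.

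The mechanism in the paper is different and essential: the martingale $F$ is built once (with atoms at scales $\lvert\alpha_k\rvert$ and weights $1/\lvert A_{\lvert\alpha_k\rvert}\rvert$), and then for \emph{each} endpoint $u\in A_{\lvert\alpha_k\rvert}$ one chooses a \emph{different} element $\alpha_{s_n}$ of the sequence (the one that has $u$ among its own block endpoints, which exists by definition of $A_s$). Proposition~\ref{lemma3} and Lemma~\ref{lemma5a} then give a lower bound for $\lvert\sigma_{\alpha_{s_n}}F\rvert$ on the set $E_u$. Since the maximal operator dominates every such $\lvert\sigma_{\alpha_{s_n}}F\rvert$, and since the sets $E_u$ for distinct $u\in A_{\lvert\alpha_k\rvert}$ are pairwise disjoint, the $L_{1/2}$-quasinorm of $\widetilde\sigma^{\ast,\nabla}F$ picks up $\lvert A_{\lvert\alpha_k\rvert}\rvert$ contributions, each of order $\lvert A_{\lvert\alpha_k\rvert}\rvert^{-1/2}$, giving a total of order $\lvert A_{\lvert\alpha_k\rvert}\rvert^{1/2}\to\infty$. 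The supremum in the definition of the maximal operator is thus used in an essential, pointwise-varying way; your single-index plan cannot reproduce this.
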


In particular, Theorem \ref{theorem1} implies the following optimal characterization:

\begin{corollary}\label{theorem3}
Let $F\in {{H}_{1/2}}\left( G \right)$ and $\left\{ n_{k}:k\geq 0\right\} $ be a sequence of positive numbers. Then the restricted maximal operator 
$\widetilde{\sigma }^{\ast ,\nabla },$
defined by \eqref{maxoperator}, is bounded  from the Hardy space ${{H}_{1/2}}$ to the Lebesgue space ${{L}_{1/2}}$ if and only if any sequence of positive numbers $\left\{ n_{k}:k\geq 0\right\} $ which satisfies $n_k\in [2^s, 2^{s+1}),$ is finite  for each $s\in\mathbb{N}_+$  and each  $\left\{ n_{k}:k\geq 0\right\} $  has bounded variation, i.e.
	\begin{eqnarray*}
		\sup_{k\in \mathbb{N}}V(n_{k})<c<\infty.
	\end{eqnarray*}
\end{corollary}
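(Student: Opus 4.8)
The plan is to deduce the corollary directly from Theorem \ref{theorem1} together with the combinatorial equivalence already recorded in Section 2. First I would combine the two parts of Theorem \ref{theorem1}: part a) asserts that the restricted maximal operator $\widetilde{\sigma}^{\ast,\nabla}$ is bounded from $H_{1/2}$ to $L_{1/2}$ whenever $\sup_{s\in\mathbb{N}}|A_s|<c<\infty$, while part b) shows that if $\sup_{s\in\mathbb{N}}|A_s|=\infty$ then boundedness fails. Since these two hypotheses are exactly complementary, taken together they yield the clean dichotomy that $\widetilde{\sigma}^{\ast,\nabla}$ is bounded from $H_{1/2}$ to $L_{1/2}$ if and only if $\sup_{s\in\mathbb{N}}|A_s|<\infty$.

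The second step is to translate the condition $\sup_{s\in\mathbb{N}}|A_s|<\infty$ into the two concrete requirements in the statement of the corollary. This equivalence is precisely what was established in the discussion of Section 2 culminating in \eqref{cond1x} and \eqref{cond2x}: there it was shown that $\sup_{s\in\mathbb{N}}|A_s|<\infty$ holds if and only if both (i) each $n_{s_j}$ has uniformly bounded variation $\sup_k V(n_k)<c<\infty$, and (ii) for every $s$ the index $s_r$ is finite, i.e. only finitely many of the $n_k$ fall into the dyadic block $[2^s,2^{s+1})$. The forward direction here uses that the breakpoints $l_i^{s_j}$ and $t_i^{s_j}$ constituting $A_s$ count, for each $n_{s_j}$, essentially the endpoints of its maximal runs of binary digits, so a bounded $|A_s|$ forces bounded variation; the reverse direction uses the counting argument around \eqref{000}, where a finite $|A_s|=r_s^3$ permits at most $2^{r_s^3(r_s^3+1)/2}-1$ distinct numbers built from those blocks, forcing $[2^s,2^{s+1})$ to contain only finitely many of the $n_k$.

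Chaining the two equivalences then yields the corollary: boundedness of $\widetilde{\sigma}^{\ast,\nabla}$ from $H_{1/2}$ to $L_{1/2}$ is equivalent to the conjunction of the finiteness of $\left\{n_k\right\}\cap[2^s,2^{s+1})$ for each $s$ and the uniform bound $\sup_k V(n_k)<c<\infty$. I expect no genuine obstacle in this deduction, since the hard analytic content is carried entirely by Theorem \ref{theorem1}; the only point requiring care is bookkeeping the quantifiers, namely verifying that the per-block conditions \eqref{cond1x} and \eqref{cond2x}, holding for every $s$, assemble into the single uniform statement $\sup_k V(n_k)<c<\infty$ over the whole sequence rather than merely a family of $s$-dependent bounds.
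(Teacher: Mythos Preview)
Your approach is exactly the paper's: the corollary is presented there without separate proof, as a direct consequence of Theorem \ref{theorem1} via the Section 2 equivalence you cite.

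The quantifier issue you flag at the end, however, is not mere bookkeeping; it is the one substantive point in the deduction. What you label the ``reverse direction'' (the two conditions $\Rightarrow\sup_s|A_s|<\infty$) is not actually argued in Section 2 --- the counting argument around \eqref{000} that you invoke there in fact runs the \emph{other} way, from finite $|A_s|$ to finitely many terms in the block --- and the implication is false if ``finite for each $s$'' is read non-uniformly. Indeed the paper's own Corollaries \ref{theorem5} and \ref{theorem6} exhibit sequences with uniformly bounded variation and only finitely many terms in each block $[2^s,2^{s+1})$, yet $\sup_s|A_s|=\infty$ and boundedness fails. The equivalence therefore requires the uniform reading (a single bound $N$ on the number of terms per block), under which $|A_s|$ is controlled by roughly $N$ times the variation bound, and your chain of equivalences then goes through cleanly.
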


In order to be able to compare with some other results in the literature (see Remark \ref{theorem4}) we also state the following:
\begin{corollary}\label{theorem2}
Let $F\in {{H}_{1/2}}\left( G \right)$. Then the restricted maximal operators $\widetilde{\sigma }_i^{\ast ,\nabla }, \ i=1,2,3,$ defined by 
\begin{equation*}
\widetilde{\sigma }^{\ast ,\nabla }_1F=\underset{k\in \mathbb{N}}{\sup }
\left\vert \sigma _{2^k}F\right\vert,
\end{equation*}

\begin{equation}\label{maxoperator0}
\widetilde{\sigma }^{\ast ,\nabla }_2F=\underset{k\in \mathbb{N}}{\sup }
\left\vert \sigma _{2^k+1}F\right\vert,
\end{equation}

	\begin{equation}\label{maxoperator1}
	\widetilde{\sigma }^{\ast ,\nabla }_3F=\underset{k\in \mathbb{N}}{\sup }
	\left\vert \sigma _{2^k+2^{[k/2]}}F\right\vert,
	\end{equation}
where $[n]$ denotes the integer part of $n$,	are all bounded  from the Hardy space ${{H}_{1/2}}$ to the Lebesgue space ${{L}_{1/2}}$.
\end{corollary}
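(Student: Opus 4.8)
The plan is to derive all three statements as immediate instances of the optimal characterization in Corollary \ref{theorem3}. That corollary asserts that $\widetilde{\sigma}^{\ast,\nabla}$ is bounded from $H_{1/2}$ to $L_{1/2}$ exactly when the underlying sequence $\{n_k:k\geq 0\}$ satisfies two conditions: each dyadic interval $[2^s,2^{s+1})$ contains only finitely many terms, and the variations are uniformly bounded, $\sup_{k\in\mathbb{N}}V(n_k)<\infty$. It therefore suffices to verify these two conditions separately for $n_k=2^k$, for $n_k=2^k+1$ and for $n_k=2^k+2^{[k/2]}$; equivalently, by the discussion following \eqref{As}, one may check directly that $\sup_{s\in\mathbb{N}}\vert A_s\vert<\infty$ in each case.

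First I would settle the localization into dyadic blocks. Since $1<2^k$ and $[k/2]<k$ for every $k\geq 1$, we have
\begin{equation*}
2^k\leq 2^k+1<2^{k+1}\qquad\text{and}\qquad 2^k\leq 2^k+2^{[k/2]}<2^{k+1},
\end{equation*}
while trivially $2^k\in[2^k,2^{k+1})$. Hence, apart from finitely many small indices, the term $n_k$ lies in the block $[2^s,2^{s+1})$ with $s=k$, so each dyadic interval contains at most two terms of the sequence (exactly one for large $s$) and the finiteness condition holds at once.

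Next I would bound the variations by reading off the binary expansions. The number $2^k$ is a single block of ones, so $V(2^k)\leq 2$. The number $2^k+1$ has nonzero binary digits only in positions $0$ and $k$, hence is a union of at most two blocks and $V(2^k+1)\leq 4$. Similarly $2^k+2^{[k/2]}$ has nonzero digits only in positions $[k/2]$ and $k$, which are distinct for $k\geq 1$, so it too is a union of at most two blocks and $V(2^k+2^{[k/2]})\leq 4$. In every case $\sup_{k\in\mathbb{N}}V(n_k)\leq 4<\infty$, and Corollary \ref{theorem3} yields the claimed boundedness.

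There is no genuine obstacle in this argument; the only point deserving attention is that for $n_k=2^k+2^{[k/2]}$ the two nonzero bits must neither push the number into the next dyadic block nor create more than a bounded number of maximal blocks in the decomposition \eqref{As}. Both facts follow from the single inequality $0\leq[k/2]<k$, which keeps the lower bit strictly below the leading bit; whether or not the two bits happen to be adjacent, as occurs for a few small $k$, the variation never exceeds $4$ and $\sup_{s\in\mathbb{N}}\vert A_s\vert<\infty$, so the hypotheses of Theorem \ref{theorem1} a) remain in force and the conclusion follows.
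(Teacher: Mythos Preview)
Your argument is correct and follows exactly the route the paper intends: Corollary~\ref{theorem2} is stated without a separate proof precisely because it is an immediate specialization of Theorem~\ref{theorem1}~a) (equivalently, of Corollary~\ref{theorem3}), and you have carried out the verification of the two hypotheses---finitely many $n_k$ in each dyadic block and uniformly bounded variation---for each of the three sequences. The only cosmetic point is that for $k=0$ the terms $2^0+1$ and $2^0+2^{[0/2]}$ fall into the block $[2^1,2^2)$ rather than $[2^0,2^1)$, but you already note that finitely many small indices cause no harm, so nothing needs to change.
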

\begin{remark}\label{theorem4}
In \cite{PT} it was proved that if  $0<p<1/2,$ then the restricted maximal operators $\widetilde{\sigma }^{\ast ,\nabla }_2$ and $\widetilde{\sigma }^{\ast ,\nabla }_3,$ defined by \eqref{maxoperator0} and \eqref{maxoperator1}, are not bounded  from the Hardy space ${{H}_{p}}$ to the Lebesgue space $weak-{{L}_{p}}$.
	
On the other hand, in \cite{PT} it was proved that if $0<p\leq 1/2$, then the restricted maximal operator defined by  $\widetilde{\sigma }^{\ast ,\nabla }_1$ is bounded  from the Hardy space ${{H}_{p}}$ to the Lebesgue space $L_p.$ 
\end{remark}

We also have the following related consequence of Theorem \ref{theorem1}:
\begin{corollary}\label{theorem5}
Let $f\in {{H}_{1/2}}\left( G \right)$ and 
$\left\{ A^s_{k}:0\leq k\leq s-1\right\} $ be a sequence of positive numbers, defined by

$$\alpha^s_{0}=2^s+2^0, \ \ \ \alpha^s_{1}=2^s+2^0+2^1, ..., \alpha^s_{s-1}=2^s+2^0+2^1+...+2^{s-1}, \ \ \ s\in\mathbb{N}.$$
Then the restricted maximal operator $\widetilde{\sigma }^{\ast ,\nabla }_4,$ defined by 
\begin{equation*}
\widetilde{\sigma }^{\ast ,\nabla }_4F=\underset{s\in \mathbb{N}}{\sup }\ \underset{0\leq k\leq s-1}{\sup }
\left\vert \sigma _{\alpha^s_k}F\right\vert,
\end{equation*}
is not bounded  from the Hardy space ${{H}_{1/2}}$ to the Lebesgue space ${{L}_{1/2}}.$
\end{corollary}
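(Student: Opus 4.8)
The plan is to deduce this directly from the sharpness part b) of Theorem \ref{theorem1}. Since $\widetilde{\sigma}^{\ast,\nabla}_4$ is precisely the restricted maximal operator \eqref{maxoperator} attached to the (flattened) sequence $\{\alpha^s_k:\ s\in\mathbb{N},\ 0\le k\le s-1\}$, it will suffice to check that this sequence satisfies the hypothesis \eqref{cond2}, i.e. that $\sup_{s\in\mathbb{N}}|A_s|=\infty$. Thus the whole argument reduces to computing the binary block structure of the numbers $\alpha^s_k$ and of the resulting sets $A_s$ defined in \eqref{As}.

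First I would record the closed form $\alpha^s_k=2^s+\sum_{j=0}^{k}2^j=2^s+2^{k+1}-1$ for $0\le k\le s-1$, and note that $2^s\le \alpha^s_k<2^{s+1}$. Hence for each fixed $s$ exactly the numbers $\alpha^s_0,\dots,\alpha^s_{s-1}$ lie in the dyadic block $[2^s,2^{s+1})$, while the numbers $\alpha^{s'}_k$ with $s'\ne s$ lie in disjoint blocks and do not contribute to $A_s$. Therefore the family $\{n_{s_1},\dots,n_{s_r}\}$ appearing in \eqref{As} is, for this sequence, exactly $\{\alpha^s_0,\dots,\alpha^s_{s-1}\}$.

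Next I would read off the blocks of consecutive ones in the binary expansion of each $\alpha^s_k$. For $1\le k\le s-2$ the ones sit precisely in the positions $0,1,\dots,k$ and in position $s$, giving two blocks $[0,k]$ and $[s,s]$ (the gap $k+1,\dots,s-1$ is admissible since $k\le s-2$), so that the contributed endpoints are $\{0,k,s\}$; for $k=0$ the two blocks are $[0,0]$ and $[s,s]$ with endpoints $\{0,s\}$; and for $k=s-1$ one has $\alpha^s_{s-1}=2^{s+1}-1$, a single block $[0,s]$ with endpoints $\{0,s\}$. Taking the union over $0\le k\le s-1$ then yields $A_s=\{0,1,\dots,s-2\}\cup\{s\}$, whence $|A_s|=s$. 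Consequently $\sup_{s\in\mathbb{N}}|A_s|=\infty$, condition \eqref{cond2} is fulfilled, and the asserted non-boundedness follows at once from Theorem \ref{theorem1} b).

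I do not expect any serious obstacle beyond bookkeeping: the only point requiring a little care is the verification that the two blocks $[0,k]$ and $[s,s]$ genuinely satisfy the separation requirement $t_1\le l_2-2$ built into \eqref{As} (this forces $k\le s-2$ and explains why the case $k=s-1$, where the two blocks merge into one, must be treated separately). The essential conceptual content is simply that, as $s$ grows, the endpoints $\{0,1,\dots,s-2\}$ accumulate without bound, so the cardinalities $|A_s|$ cannot remain uniformly bounded, which is exactly the situation excluded by part a) and covered by part b) of Theorem \ref{theorem1}.
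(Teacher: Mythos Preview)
Your proposal is correct and follows exactly the paper's own route: the paper treats this corollary as an immediate consequence of Theorem~\ref{theorem1}~b), verifying in the remark following the statement that the $l$-endpoints form $\{0,s\}$ while the $t$-endpoints fill out $\{0,1,\dots,s\}$, so that $|A_s|\to\infty$. Your block-by-block computation is in fact slightly more careful than the paper's remark (you correctly observe that $s-1\notin A_s$, obtaining $|A_s|=s$ rather than $s+1$), but the substance and method are identical.
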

\begin{remark}
We note that in this case  $V(\alpha_k^s)\leq 6$ for any $0\leq k\leq s-1$ and $s\in\mathbb{N},$ 

$${\left\{ l_{1}^{s},l_{2}^{s},...,l_{r^1_{s}}^{s} \right\}}={\left\{ 0,s \right\}}$$ 
i.e. it contains only two elements, the set 
$${\left\{ t_{1}^{s},t_{2}^{s},...,t_{{{r}_{s}^2}}^{s} \right\}}={\left\{ 0,1,2,3,...,s \right\}}$$
and
 
$${{\sup }_{s\in \mathbb{N}}}\vert{{A}_{s}}\vert={{\sup }_{s\in \mathbb{N}}}{(s+1)}=\infty .$$ 
\end{remark}

Finally, we state the following related result, which is connected to the research in \cite{tep8} (see Remark 4):
\begin{corollary}\label{theorem6}
	Let $f\in {{H}_{1/2}}\left( G \right)$ and 
	$\left\{ A^s_{k}:0\leq k\leq s-1\right\} $ be a sequence of positive numbers, defined by
	$$\beta^s_{s-1}=2^s+2^{s-1}, \ \ \ \beta^s_{s-2}=2^s+2^{s-1}+2^{s-2}, ..., \beta^s_{0}=2^s+2^{s-1}+...+2^{0}.$$
	Then the restricted maximal operator $\widetilde{\sigma }^{\ast ,\nabla }_5,$ defined by 
	\begin{equation*}
	\widetilde{\sigma }^{\ast ,\nabla }_5F=\underset{s\in \mathbb{N}}{\sup }\underset{0\leq k\leq s-1}{\sup }
	\left\vert \sigma _{\beta^s_k}F\right\vert,
	\end{equation*}
	is not bounded  from the Hardy space ${{H}_{1/2}}$ to the Lebesgue space ${{L}_{1/2}}.$
\end{corollary}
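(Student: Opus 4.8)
The plan is to deduce the statement directly from the sharpness part b) of Theorem \ref{theorem1}, so that no new analytic work is required. To invoke that result I would first regard the double-indexed family $\{\beta^s_k:0\le k\le s-1,\ s\in\mathbb{N}\}$ as a single sequence $\{n_k:k\ge 0\}$, and then it suffices to verify that the associated sets $A_s$ from \eqref{As} satisfy $\sup_{s\in\mathbb{N}}|A_s|=\infty$, i.e. that condition \eqref{cond2} holds. Once this is established, Theorem \ref{theorem1} b) immediately produces a martingale $f\in H_{1/2}$ witnessing the unboundedness from $H_{1/2}$ to $L_{1/2}$, which is exactly the assertion of the corollary. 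The entire content is thus a combinatorial computation of $A_s$ for this particular family.

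First I would record the dyadic structure of each $\beta^s_k$. Since $\beta^s_k=2^s+2^{s-1}+\cdots+2^k$, its binary expansion is a single run of consecutive $1$'s occupying exactly the positions $k,k+1,\ldots,s$, with $0$'s elsewhere. In the notation of \eqref{As} this means each $\beta^s_k$ is a single block $[l_1^{s_j},t_1^{s_j}]=[k,s]$, so $r_{s_j}=1$ and the separation condition is vacuous. In particular the digit sequence has at most two transitions, whence $V(\beta^s_k)\le 2$ for all admissible $k,s$; this is the feature that makes the example instructive, as I note below.

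Next I would identify $A_s$. For fixed $s$ the members of the family lying in $[2^s,2^{s+1})$ are precisely $\beta^s_0,\ldots,\beta^s_{s-1}$ (terms with a different leading power land in a different dyadic interval), and one checks $2^s\le\beta^s_{s-1}<\cdots<\beta^s_0<2^{s+1}$, so these are the relevant $s$ numbers $n_{s_1},\ldots,n_{s_r}$. Collecting block endpoints, the left endpoints run through all of $\{0,1,\ldots,s-1\}$ (one value of $k$ per block), while every right endpoint equals $s$. Hence
\begin{equation*}
A_s=\{0,1,\ldots,s-1\}\cup\{s\}=\{0,1,\ldots,s\},\qquad |A_s|=s+1 .
\end{equation*}
Therefore $\sup_{s\in\mathbb{N}}|A_s|=\sup_{s\in\mathbb{N}}(s+1)=\infty$, condition \eqref{cond2} is satisfied, and Theorem \ref{theorem1} b) applies, completing the argument.

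There is essentially no analytic obstacle, since all the harmonic-analytic difficulty is already packaged in Theorem \ref{theorem1} b); the corollary reduces to bookkeeping of binary digits. The only point requiring care is to confirm that the single-run structure of $\beta^s_k$ genuinely yields $s$ distinct left endpoints while collapsing every right endpoint to $s$. The value of the example lies in the contrast it exhibits: although each $\beta^s_k$ has uniformly bounded variation—so by the result of \cite{tep8} recalled in the introduction the corresponding subsequence would be bounded from $H_{1/2}$ to $H_{1/2}$—the number of distinct block-endpoints per dyadic interval nevertheless grows without bound, forcing $|A_s|\to\infty$ and hence unboundedness from $H_{1/2}$ into the larger space $L_{1/2}$.
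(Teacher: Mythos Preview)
Your proof is correct and follows the same approach as the paper, namely verifying condition \eqref{cond2} and invoking Theorem \ref{theorem1} b). Your identification of the block endpoints (left endpoints $\{0,\dots,s-1\}$, right endpoint $\{s\}$, hence $A_s=\{0,\dots,s\}$) is in fact cleaner than the paper's own Remark, and your bound $V(\beta^s_k)\le 2$ is sharper than the paper's $V(\beta^s_k)\le 4$; neither discrepancy affects the argument.
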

\begin{remark}
	We note that now $V(\beta_k^s)\leq 4,$ for any $0\leq k\leq s-1$ and $s\in\mathbb{N},$ 
	$${\left\{ t_{1}^{s},t_{2}^{s},...,t_{{{r}_{s}^2}}^{s} \right\}}={\left\{ 0,s \right\}}, \ \ \  \ \
{\left\{ l_{1}^{s},l_{2}^{s},...,l_{r^1_{s}}^{s} \right\}}={\left\{ 0,1,2,3,...,s \right\}}$$
	and 
	$${{\sup }_{s\in \mathbb{N}}}\vert{{A}_{s}}\vert={{\sup }_{s\in \mathbb{N}}}{(s+1)}=\infty .$$ 
\end{remark}
\begin{remark}\label{theorem50}
	Let $f\in {{H}_{1/2}}\left( G \right)$ and 
	$\left\{ \alpha^s_{k}:0\leq k\leq s-1\right\}$ and $\left\{ \beta^s_{k}:0\leq k\leq s-1\right\} $ be  a sequence of positive numbers, defined in Corollaries \ref{theorem5} and \ref{theorem6}.
	Then  there exist absolute constants $c_1$ and $c_2$ such that 
	\begin{equation*}
	\left\Vert \sigma _{\alpha^s_k}F\right\Vert_{H_{1/2}}\leq c_1 \left\Vert F\right\Vert_{H_{1/2}}
	\end{equation*}
and
\begin{equation*}
\left\Vert \sigma _{\beta^s_k}F\right\Vert_{H_{1/2}}\leq c_2 \left\Vert F\right\Vert_{H_{1/2}}
\end{equation*}
for any $ 0\leq k\leq s-1 \ \text{and}\  s\in\mathbb{N}.$

We note that these results were proved in \cite{tep8} and they follow, respectively, from the facts that 
$$V(\alpha_k^s)\leq 6 \ \ \ \text{and}\ \ \ V(\beta_k^s)\leq 4, \ \text{ for any} \ \ 0\leq k\leq s-1 \ \text{and}\  s\in\mathbb{N}.$$
\end{remark}

\section{Auxiliary Lemmas and Propositions}

\begin{lemma}[\textbf{Weisz \protect\cite{We3} (see also Simon \protect\cite%
{S})}]\label{lemma0}
A martingale $F=\left( F_{n},\text{ }n\in \mathbb{N}\right) $
is in $H_{p}\left( 0<p\leq 1\right) $ if and only if there exists a sequence 
$\left( a_{k},k\in \mathbb{N}\right) $ of p-atoms and a sequence $\left( \mu
_{k},k\in \mathbb{N}\right) $ of a real numbers, such that for every $n\in 
\mathbb{N},$
\begin{equation}
\qquad \sum_{k=0}^{\infty }\mu _{k}S_{2^{n}}a_{k}=F_{n},\text{ \ \ \ }%
\sum_{k=0}^{\infty }\left\vert \mu _{k}\right\vert ^{p}<\infty .  \label{6}
\end{equation}
Moreover, 

$$\left\Vert F\right\Vert _{H_{p}}\backsim \inf \left(
\sum_{k=0}^{\infty }\left\vert \mu _{k}\right\vert ^{p}\right) ^{1/p},$$
where the infimum is taken over all decomposition of $F$ of the form (\ref{6}).
\end{lemma}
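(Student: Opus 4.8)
\textbf{Plan for the proof of Lemma \ref{lemma0} (the atomic characterization of $H_p$).}

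The plan is to prove both directions of the equivalence, together with the quasi-norm equivalence, following the standard atomic decomposition theory of martingale Hardy spaces adapted to the dyadic (Walsh) setting. First I would establish the easy direction: if $F$ admits a representation as in \eqref{6} with $\sum_k|\mu_k|^p<\infty$, then $F\in H_p$ and $\|F\|_{H_p}\le c\,(\sum_k|\mu_k|^p)^{1/p}$. For this I would use that the maximal operator $(\cdot)^\ast$ is subadditive and that each $p$-atom $a$ supported on a dyadic interval $I$ satisfies a uniform bound $\|S_{2^n}a\|_{H_p}\le c$ independent of the atom; since the conditional expectation $S_{2^n}a$ is supported in $I$ when $2^{-n}\le\mu(I)$ and vanishes for coarser scales by the mean-zero property, the maximal function $(S_{2^n}a)^\ast$ is controlled on $I$ by the $L_\infty$ bound $\mu(I)^{-1/p}$ and off $I$ it vanishes. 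Using $p\le 1$ and the $p$-subadditivity of $\|\cdot\|_p^p$, one sums $\sum_k|\mu_k|^p\|a_k^\ast\|_p^p\le c\sum_k|\mu_k|^p$, which gives the bound.

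The harder direction is to produce, for a given $F\in H_p$, an atomic decomposition. The plan is to use the classical stopping-time construction. For each integer $\nu\in\mathbb{Z}$ I would define the stopping time
\begin{equation*}
\tau_\nu(x):=\inf\{n\in\mathbb{N}:|F_n(x)|>2^\nu\},
\end{equation*}
and then set the difference martingales
\begin{equation*}
a_k^\nu:=F^{\tau_{\nu+1}}-F^{\tau_\nu},
\end{equation*}
where $F^{\tau}$ denotes $F$ stopped at $\tau$. One checks that each normalized $a_k^\nu$, supported on the level set $\{\tau_\nu<\infty\}$ (a union of dyadic intervals, hence itself expressible as a disjoint union of maximal dyadic intervals $I$), is a scalar multiple $\mu_{k}^\nu$ of a genuine $p$-atom: it has vanishing integral over each generating interval because it is a martingale difference, it is supported on the dyadic set where $|F|$ first exceeds the threshold, and it obeys the pointwise bound $\le c\,2^\nu$, which matches $\mu(I)^{-1/p}$ after extracting $\mu_k^\nu\sim 2^\nu\mu(\{\tau_\nu<\infty\})^{1/p}$. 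Summing the telescoping differences over $\nu$ recovers $F_n=\sum_{\nu}(F^{\tau_{\nu+1}}_n-F^{\tau_\nu}_n)$ in the form \eqref{6}.

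The main obstacle, and the step I would spend the most care on, is the estimate
\begin{equation*}
\sum_{\nu}|\mu_k^\nu|^p\le c\,\|F\|_{H_p}^p,
\end{equation*}
which simultaneously shows that the constructed series converges in the required sense and that the infimum over decompositions is comparable to $\|F^\ast\|_p$. This is exactly where the distribution function of the maximal function $F^\ast$ enters: since $\{\tau_\nu<\infty\}=\{F^\ast>2^\nu\}$ up to the dyadic grid, the sum $\sum_\nu 2^{\nu p}\mu(\{F^\ast>2^\nu\})$ is comparable to $\int_G (F^\ast)^p\,d\mu=\|F\|_{H_p}^p$ by the standard layer-cake comparison between a geometric sum over dyadic levels and the integral of the $p$-th power. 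Combining the two directions yields $\|F\|_{H_p}\backsim\inf(\sum_k|\mu_k|^p)^{1/p}$. Since this is a known result (Weisz \cite{We3}, Simon \cite{S}), I would present the construction and the key distributional estimate and refer to those sources for the routine verifications that each $a_k$ satisfies the three atom axioms.
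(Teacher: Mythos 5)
You should first note that the paper contains no proof of Lemma \ref{lemma0} to compare against: it is imported verbatim from Weisz \cite{We3} (see also Simon \cite{S}), so your sketch has to be measured against the standard argument in those sources, which is indeed the stopping-time atomic decomposition you outline. Your easy direction is correct as stated: for an atom $a$ supported on the dyadic interval $I$, the maximal function $\sup_n|S_{2^n}a|=a^{\ast}$ vanishes off $I$ (any dyadic interval meeting $I$ from outside must contain $I$, where $\int_I a\,d\mu=0$ kills the average), is bounded by $\mu(I)^{-1/p}$ on $I$, and the $p$-subadditivity of $\Vert\cdot\Vert_p^p$ for $p\leq 1$ sums the pieces.

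There is, however, one genuine flaw in the hard direction as you wrote it. With $\tau_{\nu}(x):=\inf\{n:|F_n(x)|>2^{\nu}\}$ the stopped martingale does \emph{not} satisfy $\Vert F^{\tau_{\nu}}\Vert_{\infty}\leq c\,2^{\nu}$: you only know $|F_{\tau_{\nu}-1}|\leq 2^{\nu}$, and at the crossing time the martingale can jump arbitrarily far above the threshold, so the claimed pointwise bound $|a_k^{\nu}|\leq c\,2^{\nu}$ — hence the atom axiom $\Vert a\Vert_{\infty}\leq\mu(I)^{-1/p}$ after normalization — fails in general. The standard repair, which is what Weisz actually does, uses the regularity of the dyadic filtration: in the Walsh setting $|F_n-F_{n-1}|$ is $\zeta_{n-1}$-measurable (the difference takes opposite values on the two halves of each interval of $\zeta_{n-1}$), so $\lambda_n:=\max_{m\leq n}\bigl(\max_{j\leq m-1}|F_j|+|F_m-F_{m-1}|\bigr)$ is a nondecreasing \emph{predictable} majorant of $|F_n|$ with $\lambda_{\infty}\leq c\,F^{\ast}$, and one stops \emph{before} crossing, setting $\tau_{\nu}:=\inf\{n:\lambda_{n+1}>2^{\nu}\}$. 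Then $|F_n^{\tau_{\nu}}|\leq 2^{\nu}$ everywhere, the telescoping differences are bounded by $c\,2^{\nu}$, and since $\{\tau_{\nu}<\infty\}=\{\lambda_{\infty}>2^{\nu}\}\subset\{F^{\ast}>c^{-1}2^{\nu}\}$, your layer-cake estimate $\sum_{\nu}2^{\nu p}\mu(\tau_{\nu}<\infty)\leq c\Vert F^{\ast}\Vert_p^p$ survives unchanged. The verifications you defer to the literature are indeed routine once this is fixed: $\{\tau_{\nu}=m\}\in\zeta_m$ splits into maximal dyadic intervals, and the mean-zero property over each such interval holds because the conditional expectation of $F^{\tau_{\nu+1}}-F^{\tau_{\nu}}$ with respect to $\zeta_m$ equals $F_{m\wedge\tau_{\nu+1}}-F_{m\wedge\tau_{\nu}}=0$ on $\{\tau_{\nu}\geq m\}$. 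So your architecture is the right one, but as literally written the central $L_{\infty}$ estimate on the atoms would fail; the predictable stopping time is the missing idea.
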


\begin{lemma}[\textbf{Weisz \protect\cite{We1}}]
\label{lemma1}Suppose that an operator $T$ is $\sigma $-linear and
\end{lemma}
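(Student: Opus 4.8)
The plan is to reduce the estimate to a single $p$-atom by invoking the atomic decomposition of Lemma \ref{lemma0}, and then to exploit the $\sigma$-linearity of $T$ together with the elementary inequality $\left\vert\sum_k c_k\right\vert^p\le\sum_k\left\vert c_k\right\vert^p$, valid for $0<p\le 1$. In this way the entire difficulty is concentrated in a single uniform bound over atoms, after which the full martingale estimate follows mechanically.

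First I would take a martingale $F\in H_p$ and, using Lemma \ref{lemma0}, write $F=\sum_{k=0}^{\infty}\mu_k a_k$, where the $a_k$ are $p$-atoms supported on intervals $I_k$ and $\sum_k\left\vert\mu_k\right\vert^p\sim\left\Vert F\right\Vert_{H_p}^p$. By $\sigma$-linearity $TF=\sum_k\mu_k Ta_k$, so that the $p$-subadditivity yields
\[
\int_G\left\vert TF\right\vert^p\,d\mu\le\sum_{k=0}^{\infty}\left\vert\mu_k\right\vert^p\int_G\left\vert Ta_k\right\vert^p\,d\mu.
\]
It therefore suffices to establish a uniform bound $\int_G\left\vert Ta\right\vert^p\,d\mu\le c_p'<\infty$ for every $p$-atom $a$; summing then gives $\left\Vert TF\right\Vert_p^p\le c_p'\sum_k\left\vert\mu_k\right\vert^p$, and taking the infimum over all admissible decompositions produces $\left\Vert TF\right\Vert_p\le c\left\Vert F\right\Vert_{H_p}$.

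To obtain the uniform atomic bound I would split $\int_G=\int_{I}+\int_{\overline I}$, where $I$ denotes the support of the atom $a$. The integral over the complement $\overline I$ is exactly the quantity controlled by the standing hypothesis $\int_{\overline I}\left\vert Ta\right\vert^p\,d\mu\le c_p$. For the local part over $I$, I would use the assumed $L_\infty\to L_\infty$ boundedness of $T$ together with the normalization $\left\Vert a\right\Vert_\infty\le\mu(I)^{-1/p}$, which gives $\left\Vert Ta\right\Vert_\infty\le c\,\mu(I)^{-1/p}$ and hence
\[
\int_I\left\vert Ta\right\vert^p\,d\mu\le\mu(I)\left\Vert Ta\right\Vert_\infty^p\le c^p,
\]
a bound independent of the particular atom. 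Adding the two contributions closes the argument. (If instead one only has boundedness on $L_{p_1}$ for a fixed $1<p_1\le\infty$, the same local estimate follows from H\"older's inequality applied on $I$.)

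The main obstacle is the rigorous justification of the interchange $TF=\sum_k\mu_k Ta_k$ and of moving the $p$-th power inside the infinite sum: this is precisely where the $\sigma$-linearity of $T$, rather than mere finite linearity, is indispensable, since the sum is genuinely infinite and $T$ is only assumed sublinear in the relevant sense. One must verify that the partial sums $\sum_{k\le N}\mu_k a_k$ converge to $F$ in a topology fine enough that $T$ respects the limit, so that the pointwise inequality $\left\vert TF\right\vert\le\sum_k\left\vert\mu_k\right\vert\left\vert Ta_k\right\vert$ holds almost everywhere. Once this convergence is secured, the remainder is routine: the hypothesis handles $\overline I$ directly, and H\"older together with the atomic normalization handles $I$.
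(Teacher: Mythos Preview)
The paper does not prove Lemma~\ref{lemma1}; it is quoted as a known result of Weisz \cite{We1} and used as a black box in the proof of Theorem~\ref{theorem1}. Your argument is correct and is precisely the standard proof: atomic decomposition via Lemma~\ref{lemma0}, the pointwise inequality $\left\vert TF\right\vert\le\sum_k\left\vert\mu_k\right\vert\left\vert Ta_k\right\vert$ coming from $\sigma$-sublinearity, the subadditivity of $t\mapsto t^p$ for $0<p\le1$, and then the split $\int_G=\int_I+\int_{\overline I}$ handled respectively by the $L_\infty\to L_\infty$ bound and the hypothesis on $\overline I$. Your identification of the only delicate point---that $\sigma$-linearity is what legitimizes passing $T$ through the infinite atomic sum---is also accurate.
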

\begin{equation*}
\int\limits_{\overline{I}}\left\vert Ta\right\vert ^{p}d\mu \leq
c_{p}<\infty ,\text{ \ \ }\left( 0<p\leq 1\right)
\end{equation*}%
for every $p$-atom $a$, where  $I$ denote
the support of the atom. If $T$  is bounded from $L_{\infty \text{ 
}}$  to $L_{\infty },$  then 
\begin{equation*}
\left\Vert TF\right\Vert _{p}\leq c_{p}\left\Vert F\right\Vert _{H_{p}}.
\end{equation*}

\begin{lemma}[see e.g. \protect\cite{gat1}]\label{lemma2}
Let $t,n\in \mathbb{N}.$ Then 
\begin{equation*}
K_{2^{n}}\left( x\right) =\left\{ 
\begin{array}{c}
\text{ }2^{t-1},\text{\ if \ \ }x\in I_{n}\left( e_{t}\right) ,\text{ }n>t,%
\text{\ }x\in I_{t}\backslash I_{t+1}, \\ 
\left( 2^{n}+1\right) /2,\text{\ if \ \ }x\in I_{n}, \\ 
0,\text{\ otherwise.\ }%
\end{array}%
\right.
\end{equation*}
\end{lemma}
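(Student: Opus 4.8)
The plan is to establish the closed form for $K_{2^{n}}$ by induction on $n$, using the splitting of the Fej\'er kernel into Dirichlet kernels together with the elementary identity \eqref{1dn} for $D_{2^{n}}$. First I would record the product rule for Walsh functions, namely $w_{2^{n}+m}=w_{2^{n}}w_{m}$ whenever $0\le m<2^{n}$ (the indices $2^{n}$ and $m$ have disjoint dyadic supports), which yields $D_{2^{n}+l}=D_{2^{n}}+w_{2^{n}}D_{l}$ for $0\le l\le 2^{n}$. Summing this identity over $l=1,\dots ,2^{n}$ and splitting $\sum_{k=1}^{2^{n+1}}D_{k}=\sum_{k=1}^{2^{n}}D_{k}+\sum_{l=1}^{2^{n}}D_{2^{n}+l}$, together with the definition $2^{n}K_{2^{n}}=\sum_{k=1}^{2^{n}}D_{k}$, I obtain the recursion
$$2^{n+1}K_{2^{n+1}}=2^{n}K_{2^{n}}\left( 1+w_{2^{n}}\right) +2^{n}D_{2^{n}}.$$
Since $w_{2^{n}}=r_{n}$ equals $+1$ on $I_{n+1}$ and is otherwise governed by the bit $x_{n}$, this recursion expresses $K_{2^{n+1}}$ entirely through the inductively known $K_{2^{n}}$, the indicator $D_{2^{n}}=2^{n}\mathbf 1_{I_{n}}$, and the sign $r_{n}$.

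For the base case $n=0$ one checks directly that $K_{1}=D_{1}=w_{0}\equiv 1=(2^{0}+1)/2$ on $I_{0}=G$, in agreement with the middle branch (the other two branches being vacuous). For the inductive step I would assume the formula for $K_{2^{n}}$ and evaluate the right-hand side of the recursion on the regions forming the partition of $G$ relevant to $K_{2^{n+1}}$: the dyadic interval $I_{n+1}$, the sets $I_{n+1}(e_{t})$ for $0\le t\le n$, and the remaining ``otherwise'' set on which at least two of the coordinates $x_{0},\dots ,x_{n}$ equal $1$.

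The step then reduces to bookkeeping on these regions. On $I_{n+1}$ one has $x\in I_{n}$, $r_{n}=+1$ and $K_{2^{n}}=(2^{n}+1)/2$, giving $2^{n+1}K_{2^{n+1}}=2^{n}(2^{n}+1)+2^{2n}=2^{2n+1}+2^{n}$, i.e. $K_{2^{n+1}}=(2^{n+1}+1)/2$. On $I_{n+1}(e_{n})$ one has $x\in I_{n}$ but $r_{n}=-1$, so the first summand vanishes and the $D_{2^{n}}$ term alone gives $K_{2^{n+1}}=2^{n-1}$; on $I_{n+1}(e_{t})$ with $t<n$ one has $x\notin I_{n}$ (so $D_{2^{n}}=0$), $r_{n}=+1$, and $x\in I_{n}(e_{t})$, so the induction hypothesis yields $K_{2^{n}}=2^{t-1}$ and hence $K_{2^{n+1}}=2^{t-1}$. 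Finally, on the ``otherwise'' set I would condition on $x_{n}$: if $x_{n}=1$ then both $1+r_{n}=0$ and $D_{2^{n}}=0$; if $x_{n}=0$ then $D_{2^{n}}=0$ while $1+r_{n}=2$, but now $x$ lies in the ``otherwise'' region for $K_{2^{n}}$, so $K_{2^{n}}=0$ by the induction hypothesis. In either case $K_{2^{n+1}}=0$, which completes the induction.

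The main obstacle I anticipate is not any single estimate but keeping the region analysis consistent: one must verify that the three-part partition governing $K_{2^{n+1}}$ refines correctly into the corresponding partition for $K_{2^{n}}$ after conditioning on the bit $x_{n}$, and in particular that the boundary subcase $t=n$ (where $x\in I_{n}$ yet $r_{n}=-1$) is produced by the $D_{2^{n}}$ term rather than by the $K_{2^{n}}$ term. Once this correspondence is set up carefully, each branch follows from a one-line arithmetic simplification.
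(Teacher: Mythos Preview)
Your inductive argument is correct. The recursion
\[
2^{n+1}K_{2^{n+1}}=2^{n}K_{2^{n}}\bigl(1+r_{n}\bigr)+2^{n}D_{2^{n}}
\]
is derived properly from the splitting identity $D_{2^{n}+l}=D_{2^{n}}+w_{2^{n}}D_{l}$ (valid for $0\le l\le 2^{n}$), and your case analysis on $I_{n+1}$, $I_{n+1}(e_{n})$, $I_{n+1}(e_{t})$ with $t<n$, and the ``otherwise'' region is accurate, including the delicate point that on the otherwise set with $x_{n}=1$ a second nonzero coordinate among $x_{0},\dots,x_{n-1}$ forces $D_{2^{n}}=0$.

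Note, however, that the paper does not actually prove this lemma: it is quoted as a known identity from G\'at \cite{gat1} and stated without argument. So there is no ``paper's proof'' to compare against; your self-contained induction is the standard way such formulas are established and would serve perfectly well as the omitted verification.
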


\begin{lemma}[see e.g. \cite{GoSzeged}, \cite{tep3}]
	\label{lemma4}Let  $n\geq 2^{M}$ and
	$$x\in I_{M}^{k,l}, \ \ k=0,...,M-1, \ \ l=k+1,...,M.$$
	Then
	\begin{equation*}
	\int_{I_{M}}\left\vert K_{n}\left( x+t\right) \right\vert d\mu \left(
	t\right) \leq cn2^{k+l-M}.
	\end{equation*}
\end{lemma}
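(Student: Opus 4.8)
The plan is to reduce the statement to an integral of $K_n$ over a single dyadic coset and then to exploit the block decomposition \eqref{9a}. First I would substitute $y:=x+t$; as $t$ ranges over $I_M$ with $x$ fixed, $y$ ranges over the coset $I_M(x)$ of measure $2^{-M}$, so that
\begin{equation*}
\int_{I_M}\left\vert K_n(x+t)\right\vert d\mu(t)=\int_{I_M(x)}\left\vert K_n(y)\right\vert d\mu(y).
\end{equation*}
The role of the hypothesis $x\in I_M^{k,l}$ is that it fixes the low-order coordinates of every $y\in I_M(x)$: the first nonzero digit is $y_k=1$, and for $l\leq M-1$ one moreover has $y_l=1$ with $y_j=0$ for $j\in\{0,\dots,l\}\setminus\{k,l\}$, whereas the case $l=M$ corresponds to $x\in I_M(e_k)$, where $y_k=1$ is the only nonzero digit among the first $M$ coordinates.

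Writing $n=\sum_{i=1}^{r}2^{n_i}$ with $n_1>\cdots>n_r\geq0$ and inserting \eqref{9a}, each Walsh factor $\prod_{j=1}^{A-1}w_{2^{n_j}}$ has modulus one, so the triangle inequality yields
\begin{equation*}
n\left\vert K_n(y)\right\vert\leq\sum_{A=1}^{r}\left(2^{n_A}\left\vert K_{2^{n_A}}(y)\right\vert+n^{(A)}\left\vert D_{2^{n_A}}(y)\right\vert\right),
\end{equation*}
and it suffices to integrate each summand over $I_M(x)$. For this I would use \eqref{1dn} for the Dirichlet factors and Lemma \ref{lemma2} for the Fej\'er factors. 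By \eqref{1dn} the kernel $D_{2^{n_A}}$ is supported on $I_{n_A}$, hence can be nonzero on $I_M(x)$ only when $n_A\leq k$; and Lemma \ref{lemma2} shows that on $I_M(x)$ one has $K_{2^{n_A}}=(2^{n_A}+1)/2$ for $n_A\leq k$ and $K_{2^{n_A}}=2^{k-1}$ for $k<n_A\leq l$.

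The decisive point is that when $l\leq M-1$ the forced digit $y_l=1$ keeps $y$ out of $I_{n_A}(e_k)$ as soon as $n_A>l$, so every high-order Fej\'er factor vanishes identically on $I_M(x)$. Substituting these values and summing the resulting (geometric) series over the surviving digits $n_A\leq l$ would give $n\int_{I_M(x)}\left\vert K_n\right\vert\leq c\,2^{k+l-M}$, hence $\int_{I_M(x)}\left\vert K_n\right\vert\leq c\,2^{k+l-M}/n$, which already implies (and in fact improves) the claimed bound. The only regime in which the high-order factors persist is $l=M$, i.e. $x\in I_M(e_k)$: there each digit $n_A\geq M$ still contributes the value $2^{k-1}$ on the nested subinterval $I_{n_A}(e_k)\subset I_M(x)$, and since there are at most $\left\vert n\right\vert-M+1$ such digits one picks up a factor that is logarithmic in $n$; estimating it trivially by $n$ reproduces the stated form $cn2^{k+l-M}$.

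I expect the only delicate steps to be the coordinate bookkeeping that decides which scales $n_A$ are visible on $I_M(x)$ — governed entirely by the position data $(k,l)$ — and the single-spike case $l=M$, where the surviving high-frequency terms have to be summed. Both become routine once the decomposition \eqref{9a} is combined with the explicit kernel formulas in \eqref{1dn} and Lemma \ref{lemma2}.
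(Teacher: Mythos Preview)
The paper does not give its own proof of Lemma~\ref{lemma4}; it is quoted from \cite{GoSzeged} and \cite{tep3} as a known auxiliary estimate (and is in fact never invoked later, the proof of Theorem~\ref{theorem1} going through Proposition~\ref{lemma5} instead). So there is no in-paper argument to compare against.

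Your outline is essentially the standard proof from the cited sources: insert the block decomposition~\eqref{9a}, discard the unimodular Walsh factors, and evaluate each dyadic piece using \eqref{1dn} and Lemma~\ref{lemma2}. The coordinate bookkeeping is correct. For $l\le M-1$ the two forced nonzero digits $y_k=y_l=1$ exclude $y$ from every $I_{n_A}$ and every $I_{n_A}(e_m)$ once $n_A>l$, so all high-order terms vanish and one obtains the pointwise bound $n|K_n(y)|\le c\,2^{k+l}$, hence $\int_{I_M}|K_n(x+t)|\,d\mu(t)\le c\,2^{k+l-M}/n$. This is $n^2$ times sharper than the inequality as printed; the bound appearing in \cite{GoSzeged,tep3} is in fact $c\,2^{k+l-M}/n$, so the extra factor $n$ in the paper's statement is presumably a misprint. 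In the boundary case $l=M$ each surviving block with $n_A>M$ contributes exactly $2^{n_A}\cdot 2^{k-1}\cdot 2^{-n_A}=2^{k-1}$ to $n\int_{I_M(e_k)}|K_n|$, so the logarithmic count you describe gives $n\int|K_n|\le c\,2^{k}\log_2 n$, again well inside the stated bound.

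One small wording point: where you write that $y_l=1$ ``keeps $y$ out of $I_{n_A}(e_k)$,'' what you actually need is that $y$ has at least two nonzero coordinates below level $l+1$, hence lies in no set $I_{n_A}$ or $I_{n_A}(e_m)$ for any $m$ once $n_A>l$; stating it this way makes the vanishing of $K_{2^{n_A}}$ immediate from Lemma~\ref{lemma2}.
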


\begin{lemma}[see \protect\cite{tep7}]\label{lemma5a}
Let 
$$n=\sum_{i=1}^{s}\sum_{k=l_{i}}^{t_{i}}2^{k}, \ \text{where } \ t_{1}\geq l_{1}>l_{1}-2\geq t_{2}\geq l_{2}>l_{2}-2>...>t_{s}\geq l_{s}\geq0.$$ 
	Then, for any $i=1,2,...,s,$
	\begin{equation*}
	n\left\vert K_{n}\left( x\right) \right\vert \geq 2^{2l_{i}-4},\text{ \ 
		for \  }x\in E_{l_{i}}:=I_{l_{i}+1}\left( e_{l_{i}-1}+e_{l_{i}}\right) , 
	\end{equation*}%
	 where 
	$I_{1}\left( e_{-1}+e_{0}\right) =I_{2}\left( e_{0}+e_{1}\right) .$
\end{lemma}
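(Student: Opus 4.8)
The plan is to compute $nK_n(x)$ pointwise on $E_{l_i}$ by inserting the closed form \eqref{9a} and evaluating every factor with Lemma \ref{lemma2} and \eqref{1dn}. Fix $i$ and a point $x\in E_{l_i}=I_{l_i+1}(e_{l_i-1}+e_{l_i})$, so that $x_0=\cdots=x_{l_i-2}=0$ and $x_{l_i-1}=x_{l_i}=1$, while the coordinates $x_{l_i+1},x_{l_i+2},\dots$ are free. In particular the first nonzero coordinate of $x$ sits at position $l_i-1$, so $x\in I_{l_i-1}\setminus I_{l_i}$. Writing $n=\sum_{j=1}^{r}2^{n_j}$ with $n_1>\cdots>n_r$ (the nonzero digits of $n$ being exactly $\bigcup_{i}[l_i,t_i]$), Lemma \ref{lemma2} then yields $K_{2^m}(x)=(2^m+1)/2$ for $m\le l_i-1$, $K_{2^{l_i}}(x)=2^{l_i-2}$ (the case $x\in I_{l_i}(e_{l_i-1})$ with $t=l_i-1$), and $K_{2^m}(x)=0$ for $m\ge l_i+1$; likewise \eqref{1dn} gives $D_{2^m}(x)=2^m$ for $m\le l_i-1$ and $D_{2^m}(x)=0$ for $m\ge l_i$.

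Substituting these values into \eqref{9a} collapses the sum dramatically. Every summand with $n_A\ge l_i+1$ vanishes, since both $K_{2^{n_A}}(x)=0$ and $D_{2^{n_A}}(x)=0$ there. Because $l_i$ is itself a digit of $n$ (the bottom of the $i$-th block), there is exactly one index $A_i$ with $n_{A_i}=l_i$, and since $x\notin I_{l_i}$ its Dirichlet contribution drops out, leaving the single \emph{resonant} term $\bigl(\prod_{j<A_i}w_{2^{n_j}}(x)\bigr)\,2^{l_i}K_{2^{l_i}}(x)$ of modulus exactly $2^{l_i}\cdot 2^{l_i-2}=2^{2l_i-2}$; here I use that each Rademacher factor $w_{2^{n_j}}$ has modulus one, so the sign prefactor is harmless.

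It remains to control the terms with $n_A\le l_i-1$. The separation hypothesis $t_{i+1}\le l_i-2$ is decisive here: the digits of $n$ below $l_i$ are all $\le t_{i+1}\le l_i-2$, so only indices with $n_A\le l_i-2$ survive. For such a term, using $K_{2^{n_A}}(x)=(2^{n_A}+1)/2$, $D_{2^{n_A}}(x)=2^{n_A}$ and $n^{(A)}<2^{n_A}$, a direct estimate bounds its modulus by $2^{n_A}\bigl((2^{n_A}+1)/2+n^{(A)}\bigr)<2^{2n_A+1}$. Summing the geometric series gives
$$\sum_{n_A\le l_i-2}2^{2n_A+1}\le \sum_{m=0}^{l_i-2}2^{2m+1}<\tfrac{2}{3}4^{l_i-1}=\tfrac{2}{3}\,2^{2l_i-2}.$$
Hence, by the reverse triangle inequality,
$$n\left\vert K_n(x)\right\vert\ge 2^{2l_i-2}-\tfrac{2}{3}2^{2l_i-2}=\tfrac{1}{3}2^{2l_i-2}=\tfrac{4}{3}\,2^{2l_i-4}>2^{2l_i-4},$$
which is the claim.

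The main obstacle — and the place where the block-separation condition enters — is precisely the last step: without the gap $t_{i+1}\le l_i-2$ the remainder could include a digit at $l_i-1$, and the geometric sum would then reach $\tfrac{8}{3}2^{2l_i-2}$, overwhelming the main term and destroying the lower bound. The sign ambiguity coming from the products $\prod_{j<A}w_{2^{n_j}}(x)$ is not an obstacle, since these factors have modulus one and are absorbed by the reverse triangle inequality. Finally, the degenerate cases $l_i\in\{0,1\}$ are handled by the stated convention $I_1(e_{-1}+e_0)=I_2(e_0+e_1)$, for which the same computation (now with an empty or single-digit remainder) gives the inequality directly.
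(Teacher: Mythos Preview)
The paper does not give its own proof of this lemma---it is quoted from \cite{tep7}---so there is no in-paper argument to compare against. Your proof is correct and follows exactly the template the paper uses for the companion estimate, Proposition~\ref{lemma3}: expand $nK_n$ via \eqref{9a}, use \eqref{1dn} and Lemma~\ref{lemma2} to annihilate every summand with $n_A\ge l_i+1$, isolate the dominant contribution $2^{l_i}K_{2^{l_i}}(x)=2^{2l_i-2}$ at $n_A=l_i$, and control the remaining tail $n_A\le l_i-2$ by a geometric sum, invoking the gap $t_{i+1}\le l_i-2$ to keep the remainder below the main term.
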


We also need the following new statement of independent interest:
\begin{proposition}
	\label{lemma3}Let 
	$$n=\sum_{i=1}^{s}\sum_{k=l_{i}}^{t_{i}}2^{k}, \ \text{
	where } \ 
	t_{1}\geq l_{1}>l_{1}-2\geq t_{2}\geq l_{2}>l_{2}-2>...>t_{s}\geq l_{s}\geq
	0.$$ 
	Then, for any $i=1,2,...,s,$
	\begin{equation*}
	n\left\vert K_{n}\left( x\right) \right\vert \geq 2^{2t_{i}-2},\text{ \ \
		for \ \ }x\in E_{t_{i}}:=I_{t_{i}+1}\left( e_{t_{i}+1}+e_{t_{i}+2}\right).
	\end{equation*}
\end{proposition}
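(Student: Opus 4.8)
The plan is to evaluate the kernel identity \eqref{9a} pointwise on $E_{t_i}$ and to show that the surviving summands combine constructively rather than cancel. Write $n=\sum_{A=1}^{r}2^{n_A}$ with $n_1>\cdots>n_r$, so that the exponents $\{n_A\}$ are exactly the decreasing enumeration of $\bigcup_{i}\{l_i,\dots,t_i\}$; the gap hypothesis $l_{i-1}-2\geq t_i$ guarantees that $t_i+1$ never occurs among the $n_A$. For $x\in E_{t_i}$ one reads off that $x_j=0$ for $0\leq j\leq t_i$ while $x_{t_i+1}=x_{t_i+2}=1$, so the first nonzero coordinate of $x$ is $t_i+1$. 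First I would insert into \eqref{9a} the values supplied by Lemma \ref{lemma2} and \eqref{1dn}, namely
\begin{equation*}
D_{2^{m}}(x)=\begin{cases}2^{m},& m\leq t_i+1,\\ 0,& m\geq t_i+2,\end{cases}\qquad K_{2^{m}}(x)=\begin{cases}(2^{m}+1)/2,& m\leq t_i+1,\\ 2^{t_i},& m= t_i+2,\\ 0,& m\geq t_i+3.\end{cases}
\end{equation*}
In particular every summand of \eqref{9a} with $n_A\geq t_i+3$ vanishes, and the summand indexed by $t_i+2$, present exactly when $t_i+2$ is an exponent of $n$, reduces to $2^{t_i+2}\cdot 2^{t_i}=2^{2t_i+2}$ because there $D_{2^{t_i+2}}(x)=0$.

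Next I would control the Walsh prefactors. Since $w_{2^{n_j}}(x)=(-1)^{x_{n_j}}$ and $x_{n_j}=0$ for every exponent $n_j\leq t_i$, the prefactor $\prod_{j<A}w_{2^{n_j}}(x)$ takes one and the same value, say $P$ with $|P|=1$, for all $A$ with $n_A\leq t_i$, while the term at $t_i+2$ carries the prefactor $\pm P$ (the extra sign being the factor $w_{2^{t_i+2}}(x)=-1$). Consequently \eqref{9a} collapses: setting
\begin{equation*}
S:=\sum_{n_A\leq t_i}2^{n_A}\left(\frac{2^{n_A}+1}{2}+n^{(A)}\right),
\end{equation*}
one gets $nK_n(x)=P\,S$ when $t_i+2$ is not an exponent, and $nK_n(x)=P\left(2^{2t_i+2}-S\right)$ when it is.

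Finally I would estimate $S$ from both sides. Every summand of $S$ is positive, and its top term $(n_A=t_i)$ already exceeds $2^{t_i}\cdot\frac{2^{t_i}+1}{2}>2^{2t_i-1}$, so $S>2^{2t_i-1}$; on the other hand $\frac{2^{n_A}+1}{2}+n^{(A)}<2^{n_A+1}$ because $n^{(A)}<2^{n_A}$, whence $S<\sum_{\nu=0}^{t_i}2^{2\nu+1}<\frac{8}{3}2^{2t_i}<2^{2t_i+2}$. In the first case this gives $|nK_n(x)|=S>2^{2t_i-1}>2^{2t_i-2}$, and in the second $|nK_n(x)|=2^{2t_i+2}-S>\left(4-\tfrac{8}{3}\right)2^{2t_i}=\tfrac{4}{3}2^{2t_i}>2^{2t_i-2}$, which is the desired conclusion.

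The delicate point is the second case, where the single large block $2^{2t_i+2}$ competes against the whole lower sum $S$: the main work is the two-sided control of $S$, and the bound $S<\frac{8}{3}2^{2t_i}$ must leave comfortable room below $2^{2t_i+2}$ so that the reverse triangle inequality still yields a clean lower bound. In the first case the difficulty is of the opposite nature, since there no single summand dominates the remaining ones; here it is essential to have first observed that all terms with $n_A\leq t_i$ carry the common sign $P$, so that they reinforce (their sum being manifestly positive) instead of cancelling.
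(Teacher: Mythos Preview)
Your proof is correct. Like the paper, you start from \eqref{9a}, observe that all summands with $n_A\geq t_i+3$ vanish on $E_{t_i}$, and split into the cases where $t_i+2$ is or is not an exponent of $n$. The technical core, however, is different. The paper isolates the top summand and bounds the remaining ones by a triangle-inequality argument, which forces further sub-cases ($l_i<t_i$ versus $l_i=t_i$) and, when $t_i+2$ is an exponent, an appeal to Lemma~\ref{lemma5a}. Your key observation is that, since $x_{n_j}=0$ for every exponent $n_j\leq t_i$, the Walsh prefactors $\prod_{j<A}w_{2^{n_j}}(x)$ coincide for all such $A$; hence the summands with $n_A\leq t_i$ add constructively to a single positive quantity $S$, for which a two-sided bound $2^{2t_i-1}<S<\tfrac{8}{3}\,2^{2t_i}$ is elementary. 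This handles both cases uniformly: when $t_i+2$ is absent, $|nK_n(x)|=S$; when it is present, $|nK_n(x)|=|2^{2t_i+2}-S|=2^{2t_i+2}-S$. Your approach is cleaner and avoids the auxiliary case analysis; the paper's approach ties the result more explicitly to the companion estimate of Lemma~\ref{lemma5a}. A tiny cosmetic remark: the prefactor at $n_A=t_i+2$ is exactly $-P$ (not merely $\pm P$), so $nK_n(x)=-P\bigl(2^{2t_i+2}-S\bigr)$; this does not affect the absolute value, hence your conclusion.
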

\begin{proof} It is evident that we always have that $t_{i}+2\leq l_{i+1}.$
	If $t_{i}+2=l_{i+1},$ then 
	$E_{t_{i}}=I_{t_{i}+3}\left( e_{t_{i}+1}+e_{t_{i}+2}\right)=I_{l_{i+1}+1}\left( e_{l_{i+1}-1}+e_{l_{i+1}}\right)=E_{l_{i+1}}$
and if we apply Lemma \ref{lemma5a} we find that
\begin{equation*}
n\left\vert K_{n}\left( x\right) \right\vert \geq 2^{2l_{i+1}-4}=2^{2t_i},\text{ \ 
	for \  }x\in E_{l_{i+1}}=E_{t_{i}}.
\end{equation*}

Let $t_{i}+2<l_{i+1}.$ By combining \eqref{1dn} and Lemma \ref{lemma2}, for any $n\geq t_{i}+3$ we get that
$$D_{2^n}(x)=K_{2^n}(x)=0, \ \ \text{for} \ \ x\in E_{t_{i}}. $$

From (\ref{9a}), for $ x\in E_{t_{i}}$ we can conclude that 
\begin{eqnarray}\label{kn}
nK_{n} 
\end{eqnarray}
\begin{eqnarray*}
	&=&\sum_{r=1}^{s}\sum_{k=l_{r}}^{t_{r}}\left( \underset{j=i+1}{%
		\overset{s}{\prod }}\underset{q=l_{j}}{\overset{t_{j}}{\prod }}w_{2^{q}}%
	\underset{j=k+1}{\overset{t_{i}}{\prod }}w_{2^{j}}\right)\left( 2^{k}K_{2^{k}}+\left(
	\sum_{j=1}^{i-1}\sum_{q=l_{j}}^{t_{j}}2^{q}+\sum_{q=l_{i}}^{k-1}2^{q}\right)
	D_{2^{k}} \right)\\
	&=&\sum_{r=1}^{i}\sum_{k=l_{r}}^{t_{r}}\left( \underset{j=i+1}{%
		\overset{s}{\prod }}\underset{q=l_{j}}{\overset{t_{j}}{\prod }}w_{2^{q}}%
	\underset{j=k+1}{\overset{t_{i}}{\prod }}w_{2^{j}}\right)\left( 2^{k}K_{2^{k}}+\left(
	\sum_{j=1}^{i-1}\sum_{q=l_{j}}^{t_{j}}2^{q}+\sum_{q=l_{i}}^{k-1}2^{q}\right)
	D_{2^{k}} \right).
\end{eqnarray*}
Suppose that $l_i<t_i.$ Since
$$
\sum_{j=1}^{i-1}\sum_{q=l_{j}}^{t_{j}}2^{q}+\sum_{q=l_{i}}^{t_i-1}2^{q}\geq 2^{t_i-1}$$ 
for $x\in E_{t_{i}}$ we find that
\begin{eqnarray}\label{star0}
n\left\vert K_{n}\right\vert &\geq& \left\vert
2^{t_{i}}K_{2^{t_{i}}}+2^{t_i-1}
D_{2^{t_i}}\right\vert-\sum_{k=1}^{t_i-1}\left\vert 2^{k}K_{2^{k}}\right\vert
-\sum_{k=1}^{t_i-1}\left\vert 2^{k}D_{2^{k}}\right\vert \\ \notag
&:=&I-II-III.
\end{eqnarray}
Moreover, by combining \eqref{1dn} and Lemma \ref{lemma2} we get that
\begin{eqnarray} \label{10.023}
\ \ \ \ \  I\geq 2^{t_{i}}K_{2^{t_{i}}}\left( x\right)+2^{t_i-1}D_{2^{t_i}}=\frac{
	2^{2t_{i}}}{2}+2^{t_i-1}+\frac{2^{t_i}}{2}= 2^{2t_{i}}+2^{t_i-1}. 
\end{eqnarray}
For $II$ we have that
\begin{eqnarray}\label{10.1}
II\leq\sum_{k=0}^{t_{i}-1}2^{k}\frac{\left( 2^{k}+1\right) }{2}\leq \frac{1}{2}\frac{2^{2t_{i}}-1}{4-1}+\frac{1}{2}\frac{2^{t_{i}}-1}{2-1}
\leq \frac{2^{2t_{i}}}{6}+2^{t_{i}-1}.  
\end{eqnarray}
Moreover,  $III$ can be estimated as follows
\begin{equation}
III\leq \sum_{k=0}^{l_{i}-1}4^{k}=\frac{2^{2t_{i}}}{3}.
\label{10.2}
\end{equation}

By combining (\ref{10.023})-(\ref{10.2}) and putting them into \eqref{star0} we obtain that
\begin{equation}
n\left\vert K_{n}\left( x\right) \right\vert \geq I-II-III\geq \frac{%
	2^{2l_{i}}}{2}.  \label{10.3}
\end{equation}

If ${t_{i}}=l_i$ we get that $t_{i-1}\leq l_i-2={t_{i}}-2$. Hence, by using \eqref{kn} we find that
\begin{eqnarray}\label{star}
\ \ \ \ \ \ \ \ 	n\left\vert K_{n}\right\vert \geq \left\vert
	2^{t_{i}}K_{2^{t_{i}}}\right\vert-\sum_{k=1}^{t_i-2}\left\vert 2^{k}K_{2^{k}}\right\vert
	-\sum_{k=1}^{t_i-2}\left\vert 2^{k}D_{2^{k}}\right\vert
	:=I-II-III.
\end{eqnarray}
By simple calculations we get that
\begin{eqnarray}\label{10.000}
I\geq 2^{t_{i}}K_{2^{t_{i}}}\left( x\right)&=&\frac{%
	2^{2t_{i}}}{2}+\frac{2^{t_i}}{2},  
\end{eqnarray}
\begin{eqnarray}\label{10.10} 
\ \ \ \ \ II\leq \sum_{k=0}^{t_{i}-2}2^{k}\frac{\left( 2^{k}+1\right) }{2}
\leq\frac{1}{2}\frac{2^{2t_{i}-2}-1}{4-1}+\frac{1}{2}\frac{2^{t_{i}-1}-1}{2-1}
\leq \frac{2^{2t_{i}}}{24}+2^{t_{i}-2}  
\end{eqnarray}
and
\begin{equation}
III \leq \sum_{k=0}^{l_{i}-2}4^{k}=\frac{2^{2t_{i}}}{12}.
\label{10.20}
\end{equation}
We insert (\ref{10.000})-(\ref{10.20}) into \eqref{star} and find that
\begin{equation}
n\left\vert K_{n}\left( x\right) \right\vert \geq I-II-III\geq \frac{%
	2^{2l_{i}}}{4}.  \label{10.30}
\end{equation}

The proof is complete by just combining the estimates \eqref{10.3} and \eqref{10.30}.
\end{proof}

\begin{corollary}\label{corollary1}
Let 
	$$n=\sum_{i=1}^{s}\sum_{k=l_{i}}^{t_{i}}2^{k}, \ \text{where } \ t_{1}\geq l_{1}>l_{1}-2\geq t_{2}\geq l_{2}>l_{2}-2>...>t_{s}\geq l_{s}\geq0.$$ 
	Then, for any $i=1,2,...,s,$
\begin{equation*}
n\left\vert K_{n}\left( x\right) \right\vert \geq 2^{2t_{i}-5},\text{ \ \
	for \ \ }x\in E_{t_{i}}:=I_{t_{i}+1}\left( e_{t_{i}+1}+e_{t_{i}+2}\right).
\end{equation*}
and
	\begin{equation*}
n\left\vert K_{n}\left( x\right) \right\vert \geq 2^{2l_{i}-5},\text{ \ 
	for \  }x\in E_{l_{i}}:=I_{l_{i}+1}\left( e_{l_{i}-1}+e_{l_{i}}\right) , 
\end{equation*}
	where 
	$I_{1}\left( e_{-1}+e_{0}\right) =I_{2}\left( e_{0}+e_{1}\right) .$
\end{corollary}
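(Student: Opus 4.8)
The plan is to deduce both estimates directly from the two sharper bounds already at our disposal, namely Proposition \ref{lemma3} for the points of $E_{t_i}$ and Lemma \ref{lemma5a} for the points of $E_{l_i}$. The key observation is that the hypothesis on $n$ in the Corollary (the block decomposition $n=\sum_{i=1}^{s}\sum_{k=l_{i}}^{t_{i}}2^{k}$ together with the gap condition $t_{1}\geq l_{1}>l_{1}-2\geq t_{2}\geq\dots\geq t_{s}\geq l_{s}\geq 0$), as well as the two families of sets $E_{t_i}=I_{t_i+1}\left( e_{t_i+1}+e_{t_i+2}\right)$ and $E_{l_i}=I_{l_i+1}\left( e_{l_i-1}+e_{l_i}\right)$, coincide exactly with those appearing in Proposition \ref{lemma3} and Lemma \ref{lemma5a}. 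Consequently no new kernel or geometric estimate is required; it only remains to weaken the constants so that both inequalities can be recorded with a single common exponent offset $-5$.

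First I would invoke Proposition \ref{lemma3}, which gives $n\left\vert K_{n}(x)\right\vert\geq 2^{2t_{i}-2}$ for every $x\in E_{t_i}$ and each $i=1,\dots,s$. Since $2t_i-2\geq 2t_i-5$, we have $2^{2t_i-2}\geq 2^{2t_i-5}$, and hence $n\left\vert K_{n}(x)\right\vert\geq 2^{2t_{i}-5}$ on $E_{t_i}$, which is precisely the first asserted inequality.

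Next I would invoke Lemma \ref{lemma5a}, which gives $n\left\vert K_{n}(x)\right\vert\geq 2^{2l_{i}-4}$ for every $x\in E_{l_i}$ and each $i=1,\dots,s$, under the same convention $I_{1}\left( e_{-1}+e_{0}\right)=I_{2}\left( e_{0}+e_{1}\right)$ that is also carried over into the statement of the Corollary in order to cover the boundary case $l_i=0$. Because $2l_i-4\geq 2l_i-5$, we obtain $n\left\vert K_{n}(x)\right\vert\geq 2^{2l_{i}-5}$ on $E_{l_i}$, which is the second asserted inequality. Combining the two estimates completes the argument.

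Since the desired bounds are strictly weaker than those already established, there is no genuine obstacle here: the proof is essentially a restatement of Proposition \ref{lemma3} and Lemma \ref{lemma5a} with relaxed constants. The only points deserving a word of care are the bookkeeping of the edge case $l_i=0$, where $E_{l_i}$ must be read through the stated convention, and the remark that the indices $t_i$ and $l_i$ in the two source results refer to one and the same block decomposition of $n$, so that the two inequalities may indeed be asserted simultaneously.
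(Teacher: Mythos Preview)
Your proposal is correct and matches the paper's approach: the paper states Corollary \ref{corollary1} immediately after Proposition \ref{lemma3} without proof, since it follows at once from Proposition \ref{lemma3} and Lemma \ref{lemma5a} by weakening the exponents to the common offset $-5$. Your remarks about the edge case $l_i=0$ and the shared block decomposition are apt but not strictly needed, as both source results already carry the same convention and hypothesis.
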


Our second auxiliary result of independent interest is the following:
\begin{proposition}
\label{lemma5}Let 
$$n=\sum_{i=1}^{s}\sum_{k=l_{i}}^{t_{i}}2^{k}, \ \text{ where } \
t_{1}\geq l_{1}>l_{1}-2\geq t_{2}\geq l_{2}>l_{2}-2>...>t_{s}\geq l_{s}\geq
0.$$ 
Then 
\begin{eqnarray*}
\left\vert nK_{n}\right\vert &\leq& c\sum_{A=1}^{s}\left( 2^{l_{A}}
K_{2^{l_{A}}} +2^{t_{A}} K_{2^{t_{A}}}
+2^{l_A}\sum_{k=l_{A}}^{t_{A}}D_{2^{k}}\right).
\end{eqnarray*}
\end{proposition}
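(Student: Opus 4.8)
The plan is to start from the Gát--Simon representation \eqref{9a} and to reduce everything to a single block. Writing $n^{(k)}$ for the tail $\sum 2^{q}$ over the digits $q<k$ of $n$, formula \eqref{9a} reads $nK_{n}=\sum_{k}\big(\prod_{q>k}w_{2^{q}}\big)\big(2^{k}K_{2^{k}}+n^{(k)}D_{2^{k}}\big)$, the sum running over all binary digits $k$ of $n$. I would group this sum according to the blocks $[l_{i},t_{i}]$. For a digit $k\in[l_{i},t_{i}]$ one has $n^{(k)}=(2^{k}-2^{l_{i}})+m_{i}$, where $m_{i}:=\sum_{j>i}\sum_{q=l_{j}}^{t_{j}}2^{q}<2^{l_{i}-1}$ by the gap condition $t_{i+1}\le l_{i}-2$. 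Splitting the $D$-coefficient accordingly, the block-$i$ contribution becomes the single-block expression for $n_{i}:=\sum_{k=l_{i}}^{t_{i}}2^{k}$ (up to a unimodular constant $\prod_{q>t_{i}}w_{2^{q}}$ coming from the higher blocks, which is the same for every $k$ in the block) plus the harmless term $m_{i}\sum_{k=l_{i}}^{t_{i}}(\prod w)D_{2^{k}}$, whose modulus is at most $2^{l_{i}}\sum_{k=l_{i}}^{t_{i}}D_{2^{k}}$. A triangle inequality over the finitely many blocks then reduces the proposition to the single-block estimate $\lvert n_{i}K_{n_{i}}\rvert\le c\big(2^{l_{i}}K_{2^{l_{i}}}+2^{t_{i}}K_{2^{t_{i}}}+2^{l_{i}}\sum_{k=l_{i}}^{t_{i}}D_{2^{k}}\big)$.

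The decisive point, and the main obstacle, is that for a single block one cannot take absolute values term by term in \eqref{9a}: on test points such as $x=e_{m}+e_{m'}$ with $l\le m<m'\le t$ one checks that $nK_{n}(x)$ actually vanishes through massive cancellation, whereas the term-by-term bound is of size $2^{m+m'}$, far larger than the claimed right-hand side. Hence the cancellation between consecutive dyadic scales must be extracted \emph{algebraically} before estimating. To this end I would use the dyadic recursion $2^{k+1}K_{2^{k+1}}=(1+w_{2^{k}})2^{k}K_{2^{k}}+2^{k}D_{2^{k}}$ (a consequence of the standard identity $D_{2^{k}+m}=D_{2^{k}}+w_{2^{k}}D_{m}$), solve it for $2^{k}D_{2^{k}}$, insert this into the block sum, and carry out an Abel-type summation exploiting $W_{k-1}=w_{2^{k}}W_{k}$ for the Walsh factors $W_{k}=\prod_{q=k+1}^{t}w_{2^{q}}$.

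This computation should collapse the single-block sum to the identity
\[
n_{i}K_{n_{i}}=2^{l_{i}}W_{l_{i}}K_{2^{l_{i}}}+2^{l_{i}}\sum_{k=l_{i}+1}^{t_{i}}W_{k}\,(1-w_{2^{k}})\,K_{2^{k}}+w_{2^{t_{i}+1}}(1-2^{l_{i}-t_{i}})\,2^{t_{i}+1}K_{2^{t_{i}+1}},
\]
in which every interior term now carries the factor $1-w_{2^{k}}=2\cdot\mathbf{1}_{\{x_{k}=1\}}$. Taking absolute values, the first term gives $2^{l_{i}}K_{2^{l_{i}}}$, while the boundary term is controlled by $2^{t_{i}+1}K_{2^{t_{i}+1}}\le 4\cdot 2^{t_{i}}K_{2^{t_{i}}}$, an easy pointwise consequence of the same recursion together with Lemma \ref{lemma2}. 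For the interior sum, Lemma \ref{lemma2} shows that $\mathbf{1}_{\{x_{k}=1\}}K_{2^{k}}(x)\neq0$ for exactly two indices $k$, namely the first and second nonzero coordinates of $x$; at the first it is $\le D_{2^{k}}(x)$, and at the second it is $\le\tfrac12 D_{2^{m}}(x)$ with $m$ the first nonzero coordinate (or, when $m<l_{i}$, a constant multiple of $K_{2^{l_{i}}}(x)$). After multiplication by the prefactor $2^{l_{i}+1}$ these are absorbed into $2^{l_{i}}\sum_{k=l_{i}}^{t_{i}}D_{2^{k}}$ and $2^{l_{i}}K_{2^{l_{i}}}$, which proves the single-block bound; summing over the blocks completes the proof. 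The step demanding the most care is precisely this final bookkeeping of the two surviving interior indices against the two allowed right-hand-side terms, including the boundary cases $m=l_{i}$ and $m<l_{i}$.
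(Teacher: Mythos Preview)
Your argument is correct (with one harmless slip: in your single-block identity the boundary term should be $(1-2^{l_i-t_i})\,2^{t_i+1}K_{2^{t_i+1}}$ without the factor $w_{2^{t_i+1}}$; in the Abel shift the Walsh weight at $k=t_i+1$ is $W_{t_i}=1$, and your numerical check confirms this). Since $|w_{2^{t_i+1}}|=1$, the subsequent modulus estimates are unaffected.

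Your route, however, is genuinely different from the paper's. You reduce to a single block, then use the recursion $2^{k+1}K_{2^{k+1}}=(1+w_{2^{k}})2^{k}K_{2^{k}}+2^{k}D_{2^{k}}$ and an Abel summation to force the factor $(1-w_{2^{k}})$ into every interior term, after which a pointwise case analysis via Lemma~\ref{lemma2} shows that at most two interior indices survive and each is dominated by one of the three allowed pieces. The paper instead adds and subtracts $(2^{n_A}-1)D_{2^{n_A}}$ in \eqref{9a}, splitting $nK_n=I_1-I_2$: in $I_1$ the block sum $\sum_{k=l_v}^{t_v}\big(\prod_{j>k}w_{2^{j}}\big)\big(2^{k}K_{2^{k}}+(2^{k}-1)D_{2^{k}}\big)$ is recognised as a \emph{difference} $(2^{t_v+1}-1)K_{2^{t_v+1}-1}-(2^{l_v}-1)K_{2^{l_v}-1}$ by the very formula \eqref{9a} applied to $2^{m}-1$, and then the black-box inequalities $|K_{2^{m}-1}|\le c|K_{2^{m}}|$, $|K_{2^{m}}|\le c|K_{2^{m-1}}|$ finish the estimate; $I_2$ is controlled directly because $(2^{n_A}-1)-n^{(A)}\le 2^{l_v}$ on block $v$. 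The paper's device is shorter and avoids any pointwise bookkeeping, at the cost of invoking $|K_{2^{m}-1}|\lesssim|K_{2^{m}}|$; your approach is more self-contained and makes the cancellation mechanism completely explicit, at the cost of the two-index case analysis at the end.
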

\begin{proof}
Let 
$n=\sum_{i=1}^{s}2^{n_{i}}, \ \ \ n_{1}>n_{2}>...>n_{r}\geq 0.$
By using (\ref{9a}) we get that
\begin{eqnarray}\label{star1}
nK_{n}&=&\sum_{A=1}^{s}\left( \underset{j=1}{\overset{A-1}{\prod }}%
w_{2^{n_{j}}}\right) \left( \left( 2^{n_{A}}K_{2^{n_{A}}}+\left(
2^{n_{A}}-1\right) D_{2^{n_{A}}}\right) \right)\\ \notag
&-&\sum_{A=1}^{s}\left( \underset{j=1}{\overset{A-1}{\prod }}%
w_{2^{n_{j}}}\right) \left( (2^{n_{A}}-1)-n^{\left( A\right) }\right)
D_{2^{n_{A}}}\\ \notag
&=&I_{1}-I_{2}.
\end{eqnarray}

For $I_{1}$ we have that

\begin{equation*}
I_{1}=\sum_{v=1}^{s}\left( \underset{j=v+1}{\overset{s}{\prod }}\underset{%
i=l_{j}}{\overset{t_{j}}{\prod }}w_{2^{i}}\right) \left(
\sum_{k=l_{v}}^{t_{v}}\left( \underset{j=k+1}{\overset{t_{v}}{\prod }}%
w_{2^{j}}\right) \left( 2^{k}K_{2^{k}}+\left( 2^{k}-1\right)
D_{2^{k}}\right) \right)
\end{equation*}%
\begin{equation*}
=\sum_{v=1}^{s}\left( \underset{j=v+1}{\overset{s}{\prod }}\underset{i=l_{j}}%
{\overset{t_{j}}{\prod }}w_{2^{i}}\right) \left(
\sum_{k=0}^{t_{v}}-\sum_{k=0}^{l_{v}-1}\right) \left( \underset{j=k+1}{%
\overset{t_{v}}{\prod }}w_{2^{j}}\right) \left( 2^{k}K_{2^{k}}+\left(
2^{k}-1\right) D_{2^{k}}\right)
\end{equation*}%
\begin{equation*}
=\sum_{v=1}^{s}\left( \underset{j=v+1}{\overset{s}{\prod }}\underset{i=l_{j}}%
{\overset{t_{j}}{\prod }}w_{2^{i}}\right) \left( \sum_{k=0}^{t_{v}}\left( 
\underset{j=k+1}{\overset{t_{v}}{\prod }}w_{2^{j}}\right) \left(
2^{k}K_{2^{k}}+\left( 2^{k}-1\right) D_{2^{k}}\right) \right)
\end{equation*}%
\begin{equation*}
-\sum_{v=1}^{s}\left( \underset{j=v+1}{\overset{s}{\prod }}\underset{i=l_{j}}{%
\overset{t_{j}}{\prod }}w_{2^{i}}\right) \left( \sum_{k=0}^{l_{v}-1}\left( 
\underset{j=k+1}{\overset{l_{v}-1}{\prod }}w_{2^{j}}\right) \left(
2^{k}K_{2^{k}}+\left( 2^{k}-1\right) D_{2^{k}}\right) \right) .
\end{equation*}

Since 
$2^{n}-1=\sum_{k=0}^{n-1}2^{k}$
and
\begin{equation*}
\left( 2^{n}-1\right) K_{2^{n}-1}=\sum_{k=0}^{n-1}\left(
\prod_{j=k+1}^{n-1}w_{2^{j}}\right) \left( 2^{k}K_{2^{k}}+\left(
2^{k}-1\right) D_{2^{k}}\right) ,
\end{equation*}%
\ we obtain that 
\begin{equation*}
I_{1}=\sum_{v=1}^{s}\left( \underset{j=v+1}{\overset{s}{\prod }}\underset{%
i=l_{j}}{\overset{t_{j}}{\prod }}w_{2^{i}}\right) \left(
2^{t_{v}+1}-1\right) K_{2^{t_{v}+1}-1}
\end{equation*}%
\begin{equation*}
-\sum_{v=1}^{s}\left( \underset{j=v+1}{\overset{s}{\prod }}\underset{i=l_{j}}{%
\overset{t_{j}}{\prod }}w_{2^{i}}\right) \left( 2^{l_{v}}-1\right)
K_{2^{l_{v}}-1}.
\end{equation*}%
Thus, by using the estimates
$$\left\vert K_{2^{n}}\right\vert \leq c\left\vert
K_{2^{n-1}}\right\vert   \ \ \ \ \ \text{ 
and} \ \ \ \ \  \left\vert K_{2^{n}-1}\right\vert \leq c\left\vert
K_{2^{n}}\right\vert, $$
we can conclude that
\begin{equation}
\left\vert I_{1}\right\vert \leq c\sum_{v=1}^{s}\left( 2^{l_{v}}\left\vert
K_{2^{l_{v}}}\right\vert +2^{t_{v}}\left\vert K_{2^{t_{v}}}\right\vert
\right) .  \label{12c}
\end{equation}

Let $l_{j}<n_{A}\leq t_{j},$ for some $j=1,...,s.$ Then
$$n^{\left( A\right)
}\geq \sum_{v=l_{j}}^{n_{A}-1}2^{v}\geq 2^{n_{A}}-2^{l_{j}} \ \ \ \ \ \ \ \text{
and} \ \ \ \ \ \ 2^{n_{A}}-1-n^{\left( A\right) }\leq 2^{l_{j}}.$$ 
If $l_{j}=n_{A}$ for some $%
j=1,...,s,$ then 
$$n^{\left( A\right) }\leq 2^{t_{j-1}+1}<2^{l_{j}}.$$
Hence, 
\begin{equation}
\left\vert I_{2}\right\vert \leq
c\sum_{v=1}^{s}2^{l_{v}}\sum_{k=l_{v}}^{t_{v}}D_{2^{k}}.  \label{12d}
\end{equation}
Finally, we use the estimates (\ref{12c})-(\ref{12d}) in \eqref{star1} and  the proof is complete.
\end{proof}

\section{Proof of the Theorem \ref{theorem1}}

\begin{proof}
Since $\sigma_{n}$ is bounded from $L_{\infty }$ to $%
L_{\infty },$
by Lemma \ref{lemma1}, the proof of theorem \ref{theorem1}
will be complete, if we prove that
\begin{equation}\label{star4}
\int_{\overline{I_{M}}}\left(\sup_{s_k\in \mathbb{N}}\left\vert \sigma _{n_{s_k}}a\left( x\right)
\right\vert \right)^{1/2}d\mu \left( x\right) \leq
c<\infty ,
\end{equation}
for every 1/2-atom $a.$ We may assume that $a$ is an arbitrary $1/2$-atom,
with support$\ I,$ $\mu \left( I\right) =2^{-M}$ and $I=I_{M}.$ It is easy
to see that 
$$\sigma _{n}\left( a\right) =0, \ \ \ \text{ when } \ \ \ n< 2^{M}.$$ 
Therefore, we can suppose that $n_{s_k}\geq 2^{M}.$

Let $x\in I_{M}$ and $2^{s}\leq n_{s_k}< 2^{s+1}$ for some $n_{s_k}\geq 2^M.$ Since $$\left\Vert a\right\Vert _{\infty }\leq 2^{2M},$$
by using Proposition \ref{lemma5} we obtain that 

\begin{eqnarray}\label{01}
\left\vert \sigma _{n_{s_k}}a\left( x\right) \right\vert &\leq& \int_{I_{M}}\left\vert a\left(t\right) \right\vert \left\vert K_{n_{s_k}}\left( x+t\right) \right\vert d\mu
\left( t\right) \\ \notag
&\leq& \left\Vert a\right\Vert _{\infty }
\int_{I_{M}}\left\vert K_{n_{s_k}}\left( x+t\right) \right\vert d\mu \left(
t\right) \\ \notag
&\leq& 2^{2M}\int_{I_{M}}\left\vert
K_{n_{s_k}}\left( x+t\right) \right\vert d\mu \left( t\right) 
\\ \notag
&\leq& \frac{2^{2M}}{n_{s_k}}\sum_{A=1}^{r_{s_k}}
\int_{I_{M}}2^{l_{A}^{s_k}}\left\vert K_{2^{l_A^{s_k}}}\left( x+t\right)
\right\vert d\mu \left( t\right)
\\ \notag
&+& \frac{2^{2M}}{n_{s_k}}\sum_{A=1}^{r_{s_k}}\int_{I_{M}}2^{t_{A}^{s_k}}\left\vert
K_{2^{t_A^{s_k}}}\left( x+t\right) \right\vert d\mu \left( t\right) 
\\ \notag
&+&\frac{2^{2M}}{n_{s_k}}\sum_{A=1}^{r_{s_k}}\int_{I_{M}}2^{l_{A}^{s_k}}
\sum_{k=l_{A}^{s_k}}^{\infty}D_{2^{k}}\left( x+t\right) d\mu \left( t\right)
\\ \notag
&\leq& \frac{2^M}{2^s}\left(2^M
\sum_{A=1}^{r_s^1}\int_{I_{M}}2^{l_{A}^s}\left\vert K_{2^{l^s_{A}}}\left( x+t\right)
\right\vert d\mu \left( t\right) \right)
\\ \notag
&+& \frac{2^M}{2^s}\left(2^M\sum_{A=1}^{r^2_{s}}\int_{I_{M}}2^{t^s_{A}}\left\vert
K_{2^{t^s_{A}}}\left( x+t\right) \right\vert d\mu \left( t\right) \right) 
\\ \notag
&+&\frac{2^M}{2^s}\left(2^M\sum_{A=1}^{r_s^1}\int_{I_{M}}2^{l^s_{j}}
\sum_{k=l^s_j}^{\infty}D_{2^{k}}\left( x+t\right) d\mu \left( t\right) \right).
\end{eqnarray}

If we denote by
\begin{eqnarray*}
	II_{\alpha^s _{A}}^{1}\left( x\right) &:=&2^{M}\int_{I_{M}}2^{\alpha^s
		_{A}}\left\vert K_{2^{\alpha^s _{A}}}\left( x+t\right) \right\vert d\mu \left(
	t\right) ,\text{\ } \alpha=l, \ \text{or} \ \alpha=t\\
	II_{l^s_{A}}^{2}\left( x\right)
	&:=&2^{M}\int_{I_{M}}2^{l^s_{A}}\sum_{k=l^s_{A}}^{\infty}D_{2^{k}}\left(
	x+t\right) d\mu \left( t\right),
\end{eqnarray*}
from \eqref{01} we can conclude that
\begin{eqnarray*}
	\left\vert \sigma _{n_{s_k}}a\left( x\right) \right\vert &\leq& 
	\frac{2^M}{2^s}\left(\sum_{A=1}^{r_s^1} II_{^{l^s_{A}}}^{1}\left(
	x\right) 
	+\sum_{A=1}^{r_s^2}II_{^{t^s_{A}}}^{1}\left( x\right) 
	+\sum_{A=1}^{r_s^1}II_{l_{A}^s}^{2}\left( x\right)
	\right)
\end{eqnarray*}
and
\begin{eqnarray*}
&&\sup_{2^s\leq n_{s_k}<2^{s+1}}\left\vert \sigma _{n_{s_k}}a\left( x\right) \right\vert \leq\frac{2^M}{2^s}\left(\sum_{A=1}^{r_s^1} II_{^{l^s_{A}}}^{1}\left(
	x\right) 
	+\sum_{A=1}^{r_s^2}II_{^{t^s_{A}}}^{1}\left( x\right) 
	+\sum_{A=1}^{r_s^1}II_{l_{A}^s}^{2}\left( x\right)
	\right).
\end{eqnarray*}
Hence,
\begin{eqnarray}\label{00}
	&&\int_{\overline{I_{M}}}\left(	\sup_{2^s\leq n_{s_k}<2^{s+1}}\left\vert \sigma _{n_{s_k}}a\left( x\right) \right\vert \right)^{1/2}d\mu(x)\\ \notag
	&\leq &\frac{2^{M/2}}{2^{s/2}}\left( \sum_{A=1}^{r^1_s}\int_{\overline{I_{M}}
	}\left\vert II_{l^s_{A}}^{1}\left( x\right) \right\vert ^{1/2}d\mu \left(
	x\right) +
	 \sum_{A=1}^{r^2_s}\int_{\overline{I_{M}}}\left\vert II_{t^s_{A}}^{1}\left( x\right)
	\right\vert ^{1/2}d\mu \left( x\right) \right.\\ \notag
	&&\left.+ \sum_{A=1}^{r^1_s}\int_{\overline{I_{M}}}\left\vert
	II_{l^s_{A}}^{2}\left( x\right) \right\vert ^{1/2}d\mu \left( x\right) \right).
\end{eqnarray}

Since $\sup_{s\in \mathbb{N}}r_s^1<r<\infty, \ \ \ \sup_{s\in \mathbb{N}}r_s^2<r<\infty,$
we obtain that \eqref{star4} holds so that Theorem \ref{theorem1} a) is proved if we can prove that 
\begin{equation}\label{11.0}
\int_{\overline{I_{M}}}\left\vert II_{l^s_{A}}^{2}\left( x\right) \right\vert
^{1/2}d\mu \left( x\right) \leq c<\infty , \ \ \ A=1,...,r_s^1 
\end{equation}
and
\begin{equation}\label{11.1}
\int_{\overline{I_{M}}}\left\vert II_{\alpha^s _{A}}^{1}\left( x\right)
\right\vert ^{1/2}d\mu \left( x\right) \leq c<\infty ,
\end{equation}
for all $\alpha^s _{A}=l^s_{A}, \ \ \ A=1,...,r_s^1$ and $\alpha^s _{A}=t^s_{A},  \ \ \ A=1,...,r_s^2$. Indeed, if \eqref{11.0} and \eqref{11.1} hold, from \eqref{00}   we get that
\begin{eqnarray*}
&&\int_{\overline{I_{M}}}\left(	\sup_{ n_{s_k}\geq2^{M}}\left\vert \sigma _{n_{s_k}}a\left( x\right) \right\vert \right)^{1/2}d\mu(x)\\
	&\leq&\sum_{s=M}^{\infty}\int_{\overline{I_{M}}}\left(\sup_{2^s\leq n_{s_k}<2^{s+1}}\left\vert \sigma _{n_{s_k}}a\left( x\right) \right\vert\right)^{1/2} \leq \sum_{s=M}^{\infty}\frac{c2^{M/2}}{2^{s/2}}<C<\infty.
\end{eqnarray*}

It remains to prove \eqref{11.0} and \eqref{11.1}.
Let $$t\in I_{M} \ \ \ \text{ and } \ \ x\in I_{l+1}\left( e_{k}+e_{l}\right).$$ 
If
$ 0\leq
k<l<\alpha^s_{A}\leq M \ \ \ \text{or } \  \ \ 0\leq k<l< M< \alpha^s_{A},$ then $x+t\in
I_{l+1}\left( e_{k}+e_{l}\right) $ and if we apply Lemma \ref{lemma2} we obtain
that 
\begin{equation}
K_{2^{\alpha^s _{A}}}\left( x+t\right) =0\text{ \ and \ \ }II_{\alpha^s
_{A}}^{1}\left( x\right) =0.  \label{10a}
\end{equation}
Let $0\leq k<\alpha^s _{A}\leq l< M.$
Then $x+t\in I_{l+1}\left( e_{k}+e_{l}\right) $  and if
we use Lemma \ref{lemma2} we get that 
$$2^{\alpha^s _{A}}\left\vert K_{2^{\alpha^s _{A}}}\left( x+t\right) \right\vert
\leq 2^{\alpha^s _{A}+k}$$
so that
\begin{equation}
II_{\alpha^s _{A}}^{1}\left(
x\right) \leq 2^{\alpha^s _{A}+k}.  \label{10b}
\end{equation}
Analogously to (\ref{10b}) we can prove that if $0\leq \alpha^s _{A}\leq
k<l< M$, then
\begin{equation*}
2^{\alpha^s _{A}}\left\vert K_{2^{\alpha^s _{A}}}\left( x+t\right) \right\vert
\leq 2^{2\alpha^s _{A}},\text{\ }t\in I_{M},\text{\ }x\in I_{l+1}\left(
e_{k}+e_{l}\right) 
\end{equation*}
so that
\begin{equation}
II_{\alpha^s _{A}}^{1}\left( x\right) \leq
2^{2\alpha^s _{A}},\text{\ }t\in I_{M},\text{\ }x\in I_{l+1}\left(
e_{k}+e_{l}\right) ,  \label{10c}
\end{equation}

Let $$t\in I_{M} \ \ \ \text{ and }  \ \ \  x\in I_M(e_k).$$ 
Let
$0\leq k<\alpha^s _{A}\leq M$ or $0\leq k<M\leq \alpha^s _{A}.$
Since
$x+t \in x\in I_M(e_k) $ and if we apply Lemma \ref{lemma2} we obtain
that 
$$2^{\alpha^s _{A}}\left\vert K_{2^{\alpha^s _{A}}}\left( x+t\right) \right\vert
\leq 2^{\alpha^s _{A}+k}$$
and
\begin{equation}
II_{\alpha^s _{A}}^{1}\left(
x\right) \leq 2^{\alpha^s _{A}+k}.  \label{10bb}
\end{equation}
Let
$0\leq \alpha^s _{A}\leq k<M.$
Since
$x+t \in x\in I_M(e_k) $ and if we apply Lemma \ref{lemma2}, then we find
that 
$$2^{\alpha^s _{A}}\left\vert K_{2^{\alpha^s _{A}}}\left( x+t\right) \right\vert
\leq 2^{2\alpha^s _{A}}$$
and
\begin{equation}
II_{\alpha^s _{A}}^{1}\left(
x\right) \leq 2^{2\alpha^s _{A}}.  \label{10bbb}
\end{equation}

Let $0\leq \alpha^s _{A}< M.$ By combining (\ref{1}) with (\ref{10a})-(\ref{10bbb}) for any $A=1,...,s$ we have that 
\begin{eqnarray*}
&&\int_{\overline{I_{M}}}\left\vert II_{\alpha^s _{A}}^{1}\left( x\right)
\right\vert ^{1/2}d\mu \left( x\right) \\
&=&\overset{M-2}{\underset{k=0}{\sum }}\overset{M-1}{\underset{l=k+1}{\sum }}
\int_{I_{l+1}\left( e_{k}+e_{l}\right) }\left\vert II_{\alpha^s
_{A}}^{1}\left( x\right) \right\vert ^{1/2}d\mu \left( x\right) +\overset{M-1
}{\underset{k=0}{\sum }}\int_{I_{M}\left( e_{k}\right) }\left\vert
II_{\alpha^s _{A}}^{1}\left( x\right) \right\vert ^{1/2}d\mu \left( x\right) \\
&\leq&c\overset{\alpha^s _{A}-1}{\underset{k=0}{\sum }}\overset{M-1}{\underset{%
l=\alpha^s _{A}}{\sum }}\int_{I_{l+1}\left( e_{k}+e_{l}\right) }2^{\left(
\alpha^s _{A}+k\right) /2}d\mu \left( x\right) +c\overset{M-2}{\underset{%
k=\alpha^s _{A}}{\sum }}\overset{M-1}{\underset{l=k+1}{\sum }}%
\int_{I_{l+1}\left( e_{k}+e_{l}\right) }2^{\alpha^s _{A}}d\mu \left( x\right) \\
&+&c\overset{\alpha^s _{A}-1}{\underset{k=0}{\sum }}\int_{I_{M}\left(
e_{k}\right) }2^{\left( \alpha^s _{A}+k\right) /2}d\mu \left( x\right) +c%
\overset{M-1}{\underset{k=\alpha^s _{A}}{\sum }}\int_{I_{M}\left( e_{k}\right)
}2^{\alpha^s _{A}}d\mu \left( x\right) \\
&\leq& c\overset{\alpha^s _{A}-1}{\underset{k=0}{\sum }}\overset{M-1}{\underset{%
l=\alpha^s _{A}+1}{\sum }}\frac{2^{\left( \alpha^s _{A}+k\right) /2}}{2^{l}}+c%
\overset{M-2}{\underset{k=\alpha^s _{A}}{\sum }}\overset{M-1}{\underset{l=k+1}{%
\sum }}\frac{2^{\alpha^s _{A}}}{2^{l}}\\
&+&c\overset{\alpha^s _{A}-1}{\underset{k=0}{
\sum }}\frac{2^{\left( \alpha^s _{A}+k\right) /2}}{2^{M}}+c\overset{M-1}{%
\underset{k=\alpha^s _{A}}{\sum }}\frac{2^{\alpha^s _{A}}}{2^{M}}
\leq C<\infty.
\end{eqnarray*}
Analogously we can prove that (\ref{11.1}) holds also 
for the case $\alpha^s _{A}\geq M.$  Hence, \eqref{11.1} holds and it remains to prove \eqref{11.0}.

Let 
$t\in I_{M} \  \text{ and } \  x\in
I_{i}\backslash I_{i+1}.$ \
If $i\leq l^s_{A}-1,$ since $x+t\in I_{i}\backslash
I_{i+1},$ by using (\ref{1dn}) we have that 
\begin{equation}\label{13a}
II_{l^s_{A}}^{2}\left( x\right) =0.  
\end{equation}

If $l^s_{A}\leq i< M,$ then by using (\ref{1dn}) we obtain that 
\begin{equation}\label{13b}
II_{l^s_{A}}^{2}\left( x\right) \leq
2^{M}\int_{I_{M}}2^{l^s_{A}}\sum_{k=l^s_{A}}^{i}D_{2^{k}}\left( x+t\right) d\mu
\left( t\right) \leq c2^{l^s_{A}+i}.  
\end{equation}

Let $0\leq l^s_{A}< M.$ By combining (\ref{1}), (\ref{13a}) and (\ref
{13b}) we get that 
\begin{eqnarray}\label{star5}
&&\int_{\overline{I_{M}}}\left\vert II_{l^s_{A}}^{2}\left( x\right) \right\vert
^{1/2}d\mu \left( x\right)
 =\left(
\sum_{i=0}^{l^s_{A}-1}+\sum_{i=l^s_{A}+1}^{M-1}\right)
\int_{I_{i}\backslash I_{i+1}}\left\vert II_{l^s_{A}}^{2}\left( x\right)
\right\vert ^{1/2}d\mu \left( x\right)\\ \notag
&\leq& c\sum_{i=l^s_{A}}^{M-1}\int_{I_{i}\backslash I_{i+1}}2^{\left(
l^s_{A}+i\right) /2}d\mu \left( x\right)\leq c\sum_{i=l^s_{A}}^{M-1}2^{\left( l^s_{A}+i\right) /2}\frac{1}{2^{i}}\leq
C<\infty .
\end{eqnarray}

If  $M\leq l^s_{A},$ then $i<M\leq l^s_{A}$ and apply \eqref{13a} we get that
\begin{eqnarray}\label{star6}
\int_{\overline{I_{M}}}\left\vert II_{l^s_{A}}^{2}\left( x\right) \right\vert
	^{1/2}d\mu \left( x\right)=0,
\end{eqnarray}
and also \eqref{11.0} is proved by just combining \eqref{star5} and \eqref{star6} so part a) is complete and we turn to the proof of b).

Under condition (\ref{cond2}), there exists an increasing sequence $\left\{ \alpha _{k}:\text{ }k\geq
0\right\} \subset \left\{ n_{k}\text{ }:k\geq 0\right\} $ of positive
integers, such that 
\begin{equation}
\sum_{k=1}^{\infty }1 /\vert A_{\left\vert\alpha _{k}\right\vert}^2  \leq c<\infty .  \label{2aaa}
\end{equation}

Let 
\begin{equation*}
F_{A}:=\sum_{\left\{ k;\text{ }\left\vert \alpha _{k}\right\vert <A\right\}
}\lambda _{k}a_{k},
\end{equation*}
where
\begin{equation*}
\lambda _{k}:=\frac{1}{\left\vert A_{\vert\alpha _{k}\vert}\right\vert}  \  \text{
and} \ 
a_{k}:=2^{\left\vert \alpha _{k}\right\vert }\left(
D_{2^{\left\vert \alpha _{k}\right\vert +1}}-D_{2^{\left\vert \alpha
		_{k}\right\vert }}\right) .
\end{equation*}

Since \ \ 
$
\text{supp}(a_{k})=I_{\left\vert \alpha _{k}\right\vert },
\ \ \ 
\left\Vert
a_{k}\right\Vert _{\infty }\leq 2^{2\left\vert \alpha _{k}\right\vert }=\mu (%
\text{supp }a_{k})^{-2} \ \
$ and
\begin{equation*}
S_{2^{A}}a_{k}=\left\{ 
\begin{array}{ll}
a_{k} & \left\vert \alpha _{k}\right\vert <A, \\ 
0\, & \left\vert \alpha _{k}\right\vert \geq A,%
\end{array}%
\right.  
\end{equation*}
if we apply Lemma \ref{lemma0} and (\ref{2aaa}) we can conclude that $F=\left(
F_{1},F_{2},...\right) \in H_{1/2}.$

It is easy to prove that
\begin{equation}\label{5aa}
\widehat{F}(j)  
=\left\{ 
\begin{array}{ll}
2^{\left\vert \alpha _{k}\right\vert }
/\left\vert A_{\left\vert\alpha _{k}\right\vert}\right\vert , & \text{\thinspace \thinspace }j\in \left\{
2^{\left\vert \alpha _{k}\right\vert },...,2^{_{\left\vert \alpha
_{k}\right\vert +1}}-1\right\} ,\text{ }k=0,1,... \\ 
0\,, & \text{\thinspace }j\notin \bigcup\limits_{k=0}^{\infty }\left\{
2^{_{\left\vert \alpha _{k}\right\vert }},...,2^{_{\left\vert \alpha
_{k}\right\vert +1}}-1\right\} .\text{ }%
\end{array}%
\right.
\end{equation}

Let $2^{\left\vert \alpha _{k}\right\vert }<j<\alpha _{k}.$ By using (\ref%
{5aa}) we get that%
\begin{eqnarray}\label{sn} \ \ \ \ \
S_{j}F=S_{2^{\left\vert \alpha _{k}\right\vert }}F+\sum_{v=2^{^{\left\vert
\alpha _{k}\right\vert }}}^{j-1}\widehat{F}(v)w_{v} =S_{2^{\left\vert \alpha
_{k}\right\vert }}F+\frac{\left( D_{j}-D_{2^{\left\vert \alpha
_{k}\right\vert }}\right) 2^{\left\vert \alpha _{k}\right\vert }}{\left\vert A_{{\left\vert\alpha _{k}\right\vert}}\right\vert}  
\end{eqnarray}
Let \
$\begin{matrix}
2^{\left\vert\alpha _{k}\right\vert}\le {{\alpha}_{{{s}_{n}}}}\le {2^{{\left\vert\alpha _{k}\right\vert}+1}}.
\end{matrix}$ \
Then, by using \eqref{sn} we find that
\begin{eqnarray}\label{7aaa} 
&&\sigma_{\alpha_{s_n}}F
=\frac{1}{\alpha_{s_n}}\sum_{j=1}^{2^{\left\vert \alpha_{k}\right\vert }}S_{j}F+\frac{1}{\alpha _{s_n}}
\sum_{j=2^{\left\vert \alpha_{k}\right\vert }+1}^{\alpha _{s_n}}S_{j}F \\ \notag
&=&\frac{\sigma_{2^{\left\vert \alpha _{k}\right\vert }}F}{\alpha _{s_n}}+\frac{\left( \alpha _{k}-2^{\left\vert
\alpha _{k}\right\vert }\right) S_{2^{\left\vert \alpha _{k}\right\vert }}F}{ \alpha _{s_n}}+\frac{2^{\left\vert \alpha
_{k}\right\vert } }{\left\vert A_{{\left\vert\alpha _{k}\right\vert}}\right\vert \alpha _{s_n}}\sum_{j=2^{_{\left\vert
\alpha _{k}\right\vert }}+1}^{\alpha_{s_n}}\left( D_j-D_{2^{\left\vert
\alpha _{k}\right\vert }}\right)\\ \notag
&:=&III_{1}+III_{2}+III_{3}.
\end{eqnarray}

Since
$$D_{j+2^{m}}=D_{2^{m}}+w_{_{2^{m}}}D_{j},\text{ \  when \ }j<2^{m}  $$
we obtain that

\begin{eqnarray}\label{9aaa} \ \ \ \ \ 
&&\left\vert III_{3}\right\vert =\frac{2^{\left\vert \alpha _{k}\right\vert
}}{\left\vert A_{\left\vert\alpha _{k}\right\vert}\right\vert \alpha _{s_n}}\left\vert \sum_{j=1}^{\alpha
_{s_n}-2^{_{\left\vert \alpha_{k}\right\vert }}}\left( D_{j+2^{_{\left\vert
\alpha _{k}\right\vert }}}-D_{2^{\left\vert \alpha _{k}\right\vert }}\right)
\right\vert \\ \notag
&=&\frac{2^{\left\vert \alpha _{k}\right\vert }}{\left\vert A_{\left\vert\alpha _{k}\right\vert}\right\vert \alpha _{s_n}}\left\vert \sum_{j=1}^{\alpha _{s_n}-2^{\left\vert \alpha_{k}\right\vert }}D_j\right\vert 
=\frac{2^{\left\vert \alpha_{k}\right\vert } }{\left\vert A_{\left\vert\alpha _{k}\right\vert}\right\vert \alpha_{s_{n}}}\left( \alpha
_{s_n}-2^{\left\vert \alpha_{k}\right\vert }\right) \left\vert K_{\alpha
_{s_n}-2^{\left\vert \alpha_{k}\right\vert }}\right\vert
\\ \notag
&\geq&\frac{1}{2\left\vert A_{\left\vert\alpha _{k}\right\vert}\right\vert }\left( \alpha_{s_n}-2^{\left\vert \alpha _{k}\right\vert }\right) \left\vert K_{\alpha_{s_{n}}-2^{\left\vert \alpha _{k}\right\vert }}\right\vert.
\end{eqnarray}
By combining the well-known estimates (see \cite{PTWbook})
	\begin{equation*}
\left\Vert S_{2^k}F\right\Vert_{H_{1/2}}\leq c_1 \left\Vert F\right\Vert_{H_{1/2}} \ \ \ \text{and } \ \ \ \left\Vert \sigma_{2^k}F\right\Vert_{H_{1/2}}\leq c_2 \left\Vert F\right\Vert_{H_{1/2}}, \ \ \ \ k\in \mathbb{N},
\end{equation*}
we obtain that
$$\left\Vert III_{1}\right\Vert
_{1/2}\leq C \ \ \ \text{and} \ \ \ \left\Vert III_{2}\right\Vert _{1/2}\leq C.$$ 

Let 
$\begin{matrix}
2^{\left\vert\alpha _{k}\right\vert}\le {{\alpha}_{{{s}_{1}}}}\le {{\alpha}_{{{s}_{2}}}}\le ...\le {{\alpha}_{{{s}_{r}}}}\le {2^{{\left\vert\alpha _{k}\right\vert}+1}}
\end{matrix}$ 
be natural numbers which generates the set 
$$
{{A}_{\left\vert\alpha _{k}\right\vert}}={\left\{ l_{1}^{\left\vert\alpha _{k}\right\vert},l_{2}^{\left\vert\alpha _{k}\right\vert},...,l_{r^1_{\left\vert\alpha _{k}\right\vert}}^{\left\vert\alpha _{k}\right\vert} \right\}}
\bigcup{\left\{ t_{1}^{\left\vert\alpha _{k}\right\vert},t_{2}^{\left\vert\alpha _{k}\right\vert},...,t_{{{r}_{\left\vert\alpha _{k}\right\vert}^2}}^{\left\vert\alpha _{k}\right\vert} \right\}}
$$
and choose number $\alpha_{s_{n}}=\sum_{i=1}^{r_{n}}\sum_{k=l_{i}^{n}}^{t_{i}^{n}}2^{k},$ where
\begin{eqnarray*}
t_{1}^{\left\vert\alpha _{k}\right\vert}\geq l_{1}^{\left\vert\alpha _{k}\right\vert}>l_{1}^{\left\vert\alpha _{k}\right\vert}-2\geq t_{2}^{\left\vert\alpha _{k}\right\vert}\geq
l_{2}^{\left\vert\alpha _{k}\right\vert}>l_{2}^{\left\vert\alpha _{k}\right\vert}-2\geq ...\geq t_{\left\vert\alpha _{k}\right\vert}^{\left\vert\alpha _{k}\right\vert}\geq l_{\left\vert\alpha _{k}\right\vert}^{\left\vert\alpha _{k}\right\vert}\geq 0,
\end{eqnarray*}
for some $1\leq n\leq r,$ such that 
$l^{\left\vert\alpha _{k}\right\vert}_u=l_i,  \  \text{for some} \ 1\leq u\leq r^1_{\left\vert\alpha _{k}\right\vert},  1\leq i\leq r_{\left\vert\alpha _{k}\right\vert}^1. $

Since
$\mu \left\{E_{l_{i}}\right\} \geq 1/2^{l_i+1},$
by using Corollary \ref{corollary1} we get that 
\begin{eqnarray}\label{low1}
\int_{E_{l_{i}}}\left\vert\widetilde{\sigma }^{\ast ,\nabla }F\right\vert^{1/2}d\mu \left( x\right)&\geq&	\int_{E_{l_{i}}}\left\vert \sigma _{\alpha _{s_n}}F(x)\right\vert^{1/2} d\mu \left( x\right)\\ \notag
&\geq& \frac{2^{\left(2l_{i}-6\right)/2}}{\sqrt{2}\left(\left\vert A_{ {\left\vert\alpha _{k}\right\vert}}\right\vert \right)^{1/2}}\frac{1}{2^{l_i+1}}\geq\frac{1}{2^5\left(\left\vert A_{{\left\vert\alpha _{k}\right\vert}}\right\vert\right)^{1/2}}.
\end{eqnarray}

On the other hand, we can also choose number $\alpha_{s_{n}},$ for some $1\leq n\leq r,$ such that 
$t^{\left\vert\alpha _{k}\right\vert}_u=t_i,  \  \text{for some} \ 1\leq u\leq r^2_{\left\vert\alpha _{k}\right\vert}, \ \  1\leq i\leq r_{\left\vert\alpha _{k}\right\vert}^2. $
According to the fact that 
$\mu \left\{
E_{t_{i}}\right\} \geq 1/2^{t_{i}+3},$ 
by using again Corollary \ref{corollary1} for some $\alpha _{k}$ and $1\leq i\leq r_s^2$ we also get that
\begin{eqnarray}\label{low2} \ \ \  \ \ \
	&&\int_{E_{t_{i}}}\left\vert\widetilde{\sigma }^{\ast ,\nabla }F\right\vert^{1/2} d\mu \left( x\right)\geq	\int_{E_{t_{i}}}\left\vert \sigma _{\alpha _{s_n}}F(x)\right\vert^{1/2} d\mu \left( x\right)\\ \notag
	&\geq& \frac{1}{\sqrt{2}\left(\left\vert A_{{\left\vert\alpha _{k}\right\vert}}\right\vert\right)^{1/2} }2^{\left(2t_{i}-6\right)/2}\frac{1}{2^{t_i+3}}\geq\frac{1}{2^7\left(\left\vert A_{{\left\vert\alpha _{k}\right\vert}}\right\vert\right)^{1/2}}.
\end{eqnarray}
By combining (\ref{7aaa})-(\ref{low2}) with Proposition \ref{lemma3} for sufficiently big $\alpha_k$ we get that 
\begin{eqnarray*}
&&\int_{G}\left\vert\widetilde{\sigma }^{\ast ,\nabla }F\right\vert^{1/2} d\mu \left( x\right)
\geq\left\Vert III_{3}\right\Vert
_{1/2}^{1/2}-\left\Vert III_{2}\right\Vert _{1/2}^{1/2}-\left\Vert
III_{1}\right\Vert _{1/2}^{1/2}\\
&\geq& \underset{i=1}{\overset{r^1_{\left\vert\alpha _{k}\right\vert}-1}{\sum }}\int_{E_{l_i}}\left\vert\widetilde{\sigma }^{\ast ,\nabla }F\right\vert^{1/2} d\mu \left( x\right)+\underset{i=1}{\overset{r^2_{\left\vert\alpha _{k}\right\vert}-1}{\sum }}\int_{E_{t_i}}\left\vert\widetilde{\sigma }^{\ast ,\nabla }F\right\vert^{1/2} d\mu \left( x\right)-2C\\
&\geq&\frac{1}{2^7\left(\left\vert A_{{\left\vert\alpha _{k}\right\vert}}\right\vert\right)^{1/2}}(r_{\left\vert\alpha _{k}\right\vert}^1+r_{\left\vert\alpha _{k}\right\vert}^2)-2C\geq \frac{\left(\left\vert A_{\left\vert\alpha _{k}\right\vert}\right\vert\right)^{1/2}}{2^8}\to \infty, \ \   \text{as} \ \  k\to \infty,
\end{eqnarray*}
so also part b) is proved and the proof is complete.
\end{proof}


\begin{thebibliography}{99}
	
\bibitem{BNPT} \textit{D. Baramidze, N. Nadirashvili, L.-E. Persson} and \textit{G. Tephnadze,} Some weak-type inequalities and almost everywhere convergence of Vilenkin-Nörlund means, J. Inequal. Appl., 2022 (to appear).

\bibitem{tb} \textit{I. Blahota, G. Tephnadze, } Strong convergence theorem
for Vilenkin-Fejér means, Publ. Math. Debrecen 85 (1-2) (2014), 181-196.

\bibitem{Fu} \textit{N. J. Fujii,} A maximal inequality for $H_{1}$
functions on the generalized Walsh-Paley group, Proc. Amer. Math. Soc. 77
(1979), 111-116.

\bibitem{gat1} \textit{G. Gát,} Inverstigations of certain operators with
respect to the Vilenkin system, Acta Math. Hung. 61 (1993), 131-149.

\bibitem{GoAMH} \textit{U. Goginava,} Maximal operators of Fejér means of
double Walsh-Fourier series. Acta Math. Hungar. 115 (2007), 333--340.


\bibitem{Goginava} \textit{U. Goginava, }The martingale Hardy type
inequality for Marcinkiewicz-Fejér means of two-dimensional conjugate
Walsh-Fourier series, Acta Math. Sinica 27, (2011), 1949-1958.

\bibitem{GoSzeged} \textit{U. Goginava,} Maximal operators of Fejér-Walsh
means. Acta Sci. Math. (Szeged) 74 (2008), 615--624.


\bibitem{G-E-S} \textit{B. Golubov, A. Efimov} and \textit{V. Skvortsov,} Walsh
series and transformations, Dordrecht, Boston, London, 1991. Kluwer
Acad. Publ. 1991.



\bibitem{Mor} \textit{F. Móricz,} \textit{A. Siddiqi}, Approximation by 
Nörlund means of Walsh-Fourier series. J. Approx. Theory 70
(1992), 375-389.

\bibitem{NTT} \textit{ N. Nadirashvili, G. Tephnadze}  and \textit{  G. Tutberidze,} Almost everywhere and norm convergence of Approximate Identity and Fejér means of trigonometric and Vilenkin systems, Trans. A. Razmadze Math. Inst. (to appear), 2023.

\bibitem{na} \textit{K. Nagy,} Approximation by Ces\`{a}ro means of negative
order of double Walsh-Kaczmarz-Fourier series. Tohoku Math. J. 64 (2012),
317-331.

\bibitem{PS} \textit{J. Pál } and \textit{P. Simon,} On a generalization of
the concept of derivate, Acta Math. Hung. 29 (1977), 155-164.

\bibitem{PTT1} \textit{L. E. Persson, G. Tephnadze} and \textit{ G. Tutberidze,} On the boundedness of subsequences of Vilenkin-Fejér means on the martingale Hardy spaces, Oper. Matrices 14, 1 (2020), 283-294.

\bibitem{PTW1} \textit{L.-E. Persson, F. Schipp, G. Tephnadze} and \textit{F. Weisz,} An analogy of the Carleson-Hunt theorem with respect to Vilenkin systems, J. Fourier Anal. Appl. 2022,  Paper no. 48, 29 pp.

\bibitem{PT} \textit{L. E. Persson} and  \textit{G. Tephnadze,} A sharp boundedness result concerning some maximal operators of Vilenkin-Fejér means, Mediterr. J. Math. 13, 4 (2016), 1841-1853.

\bibitem{PTW2} \textit{L. E. Persson, G. Tephnadze} and  \textit{P. Wall,} On the maximal operators of Vilenkin-Nörlund means, J. Fourier Anal. Appl. 21, 1 (2015), 76-94. 

\bibitem{PTWbook} \textit{L. E. Persson, G. Tephnadze} and \textit{F. Weisz,} Martingale Hardy Spaces and Summability of Vilenkin-Fourier Series,  Birkhäuser/Springer, 2022.

\bibitem{sws} \textit{F. Schipp, W. Wade, P. Simon} and \textit{ J. Pál, } Walsh series,
An Introduction to Duadic Harmonic Analysis, Akademiai Kiadó, 
Budapest-AdamHilger, 1990.

\bibitem{Sc} \textit{F. Schipp,} Certain rearrangements of series in the
Walsh series, Mat. Zametki, 18 (1975), 193-201.

\bibitem{Si1} \textit{P. Simon,} Cesáro summability with respect to
two-parameter Walsh systems, Monatsh. Math. 131, 4 (2000), 321--334.

\bibitem{Si2} \textit{P. Simon,} Investigations with respect to the Vilenkin
system, Ann. Univ. Sci. Budapest. Eötvös Sect. Math., 27 (1984), 87-101.


\bibitem{S} \textit{P. Simon,} A note on the of the Sunouchi operator with
respect to Vilenkin systems, Ann. Univ. Sci. E\"otv\"os Sect. Math. 43 (2000), 101-116.

\bibitem{tep1} \textit{G. Tephnadze,} Fejér means of Vilenkin-Fourier
series, Stud. Sci. Math. Hungar., 49, (2012), 79-90.

\bibitem{tep2} \textit{G. Tephnadze,} On the maximal operator of Vilenkin-Fejér means, Turkish J. Math, 37, (2013), 308-318.

\bibitem{tep3} \textit{G. Tephnadze,} On the maximal operators of
Vilenkin-Fejér means on Hardy spaces, Math. Inequal. Appl. 16, 1 (2013), 301-312.

\bibitem{tep4} \textit{G. Tephnadze, } On the partial sums of Walsh-Fourier series, Colloq. Math. 141, 2 (2015), 227-242.

\bibitem{tep5} \textit{G. Tephnadze, }  On the partial sums of Vilenkin-Fourier series, J. Contemp. Math. Anal. 49, 1 (2014), 23-32.

\bibitem{tep6} \textit{G. Tephnadze, } A note on the norm convergence by
Vilenkin-Fejér means, Georgian Math. J. 21, 4 (2014), 511-517.

\bibitem{tep7} \textit{G. Tephnadze, } Strong convergence theorems of
Walsh-Fejér means, Acta Math. Hungar. 142, 1 (2014), 244--259.

\bibitem{tep8} \textit{G. Tephnadze,} On the convergence of Fejér means of Walsh-Fourier series in the space $H_p$, J. Contemp. Math. Anal. 51, 2 (2016), 90-102.

\bibitem{tut1} \textit{	G. Tutberidze,} Modulus of continuity and boundedness of subsequences of Vilenkin-Fejer means in the martingale Hardy spaces, Georgian Math. J. 29, 1 (2022), 153-162.

\bibitem{We2} \textit{F. Weisz}, Cesáro summability of one- and
two-dimensional Walsh-Fourier series, Anal. Math. 22 (1996), 229-242.

\bibitem{We1} \textit{F. Weisz,} Martingale Hardy spaces and their
applications in Fourier Analysis, Springer, Berlin-Heideiberg-New York, 1994.

\bibitem{We3} \textit{F. Weisz,} Summability of multi-dimensional Fourier
series and Hardy space, Kluwer Academic, Dordrecht, 2002.

\bibitem{We4} \textit{F. Weisz,} Weak type inequalities for the Walsh and
bounded Ciesielski systems, Anal. Math. 30, 2 (2004), 147-160.
\end{thebibliography}
\end{document}